\documentclass[final,leqno,onefignum,onetabnum]{siamltex1213}

\usepackage{rotating}
\usepackage{url}
\usepackage{float}
\usepackage{graphicx}
\usepackage{color}
\usepackage{latexsym}
\usepackage{amsfonts}
\usepackage{amsmath}
\usepackage{psfrag}
\usepackage{mathrsfs}
\usepackage{algpseudocode}
\usepackage[ruled]{algorithm}
\usepackage[]{todonotes}

\newcommand{\LL}{\mathscr{L}}
\newcommand*{\lcdot}{\raisebox{-0.5ex}{\scalebox{1.5}{$\cdot$}}}
\def\R{\mathbb{R}}
\def\Z{\mathbb{Z}}

\def\vareps{\varepsilon}

\def\FASTQPA{\texttt{FAST-QPA}}

\title{A Feasible Active Set Method with Reoptimization for Convex
  Quadratic Mixed-Integer Programming\thanks{The second author has
    been partially supported by the German Research Foundation (DFG) under grant
    BU 2313/4.
}}


\author{Christoph~Buchheim\thanks{Fakult\"at f\"ur Mathematik, Technische Universit\"at Dortmund, Vogelpothsweg 87, 44227 Dortmund, Germany
    \email{christoph.buchheim@tu-dortmund.de}} 
  \and 
  Marianna~De~Santis\thanks{Fakult\"at f\"ur Mathematik, Technische Universit\"at Dortmund, Vogelpothsweg 87, 44227 Dortmund, Germany
    \email{marianna.de.santis@math.tu-dortmund.de}}
  \and
  Stefano~Lucidi\thanks{Dipartimento di Ingegneria Informatica
    Automatica e Gestionale, Sapienza Universit\`{a} di Roma, Via
    Ariosto, 25, 00185 Roma, Italy
    \email{stefano.lucidi@dis.uniroma1.it}}
  \and
  Francesco~Rinaldi\thanks{Dipartimento di Matematica, Universit\`a
    di Padova, Via Trieste, 63, 35121 Padova, Italy
    \email{rinaldi@math.unipd.it}}
  \and
  Long~Trieu\thanks{Fakult\"at f\"ur Mathematik, Technische Universit\"at Dortmund, Vogelpothsweg 87, 44227 Dortmund, Germany
    \email{long.trieu@math.tu-dortmund.de}}
}

\begin{document}
\maketitle
\slugger{mms}{xxxx}{xx}{x}{x--x}

\begin{abstract}
  We propose a feasible active set method for convex quadratic programming problems with non-negativity constraints.
  This method is specifically designed to be embedded into a branch-and-bound algorithm
  for convex quadratic mixed integer programming problems.
  The branch-and-bound algorithm generalizes the approach for unconstrained 
  convex quadratic integer programming proposed by Buchheim, Caprara and 
  Lodi~\cite{buchheim2012} to the presence of linear constraints.
  The main feature of the latter approach consists in a sophisticated preprocessing
  phase, leading to a fast enumeration of the branch-and-bound nodes. 
  Moreover, the feasible active set method takes advantage of this preprocessing phase and is well suited
  for reoptimization. 
  Experimental results for randomly generated instances show that
  the new approach significantly outperforms the MIQP solver of \texttt{CPLEX 12.6} 
  for instances with a small number of constraints.
\end{abstract}

\begin{keywords}integer programming, quadratic programming, global optimization\end{keywords}

  \begin{AMS}90C10, 90C20, 90C57\end{AMS}

    \pagestyle{myheadings}
    \thispagestyle{plain}
    \markboth{C. BUCHHEIM, M. DE SANTIS, S. LUCIDI, F. RINALDI, AND
      L. TRIEU}{FEASIBLE ACTIVE SET METHOD FOR CONVEX MIQP}

    \section{Introduction}
    We consider mixed-integer optimization problems with strictly convex quadratic objective
    functions and linear constraints:
    \begin{equation}\tag{MIQP}\label{QIP}
      \begin{array}{l l}
        \min & f (x) = x^{\top}Qx + c^{\top}x + d\\
        \textnormal{s.t. }& Ax \leq b\\
        & x_i\in \Z,\ i=1,\dots,n_1\\
        & x_i\in \R,\ i=n_1+1,\dots,n
      \end{array}
    \end{equation}
    where $Q \in \R^{n\times n}$ is a positive definite matrix, $c \in \R^n$, $d
    \in \R$, $A \in \R^{m\times n}$, $b \in \R^m$, and~$n_1\in\{0,\dots,n\}$ is the number of integer variables.
    Up to now, all solution methods for convex mixed integer quadratic
    programming are based on either cutting planes~\cite{agrawal1974}, 
    branch-and-cut~\cite{bienstock1996}, 
    Benders decomposition~\cite{lazimy1982,lazimy1985} 
    or branch-and-bound~\cite{al-khayyal1990,fletcher1998,leyffer1993}. 
    Numerical experiments by Fletcher and Leyffer~\cite{fletcher1998} 
    showed that branch-and-bound is the most effective approach out of all the common methods, 
    since the continuous relaxations are very easy to solve. 
    Standard commercial solvers that can handle~\eqref{QIP} include 
    \texttt{CPLEX}~\cite{cplex-url}, \texttt{Xpress}~\cite{express-url}, 
    \texttt{Gurobi}~\cite{gurobi-url} and \texttt{MOSEK}~\cite{mosek-url}, 
    while \texttt{Bonmin}~\cite{bonami:2008} and \texttt{SCIP}~\cite{achterberg2009} 
    are well-known non-commercial software packages being capable of solving~\eqref{QIP}.

    Many optimization problems in real world applications can be
    formulated as convex mixed-integer optimization problems with few
    linear constraints, e.g., the classical mean-variance optimization
    problem (MVO) for the selection of portfolios of
    assets~\cite{cornuejols2006}.  Thus, the design of effective
    algorithms for this class of problems plays an important role both in
    theory and in practice.

    Our approach is based on a branch-and-bound scheme that enumerates
    nodes very quickly. Similar to Buchheim et al.~\cite{buchheim2012}, we
    fix the branching order in advance, thus losing the flexibility of
    choosing sophisticated branching strategies but gaining the advantage
    of shifting expensive computations into a preprocessing phase. In each
    node the dual problem of the continuous relaxation is solved in order
    to determine a local lower bound. Since all constraints of the
    continuous relaxation of~\eqref{QIP} are affine, strong duality holds
    if the primal problem is feasible. The dual problem of the continuous
    relaxation is again a convex Quadratic Programming (QP) problem but contains
    only non-negativity constraints, for its solution we devise a specific
    feasible active set method.
    By considering the dual problem, it suffices to find an approximate
    solution, as each dual feasible solution yields a valid lower bound.
    We can thus prune the branch-and-bound node as soon as the current
    upper bound is exceeded by the value of any feasible iterate produced
    in a solution algorithm for the dual problem. 

    Another feature of our algorithm is the use of warmstarts: after each
    branching, corresponding to the fixing of a variable, we pass the
    optimal solution from the parent node as starting point in the
    children nodes. This leads to a significant reduction in the average
    number of iterations needed to solve the dual problem to optimality.

    If the primal problem has a small number of constraints, the dual
    problem has a small dimension. Using sophisticated incremental
    computations, the overall running time per node in our approach is
    linear in the dimension $n$ if the number of constraints is fixed and
    when we assume that the solution of the dual problem, which is of
    fixed dimension in this case, can be done in constant time. In this sense, our
    approach can be seen as an extension of the algorithm devised
    in~\cite{buchheim2012}.

    The paper is organized as follows. In Section~\ref{sec:feas_as} we
    present the active set method for convex QP problems with
    non-negativity constraints. The properties of the proposed active set
    estimation are discussed and the convergence of the algorithm is
    analyzed.  Section~\ref{sec:as_in_bnb} presents an outline of the
    branch-and-bound algorithm: we discuss the advantages of considering
    the corresponding dual problem instead of the primal one. Afterwards,
    we explain the idea of reoptimization, using warmstarts within the
    branch-and-bound scheme. The end of the section deals with some tricks
    to speed up the algorithm by using incremental computations and
    an intelligent preprocessing. In Section~\ref{sec:exp} we present computational
    results and compare the performance of the proposed algorithm, applied
    to randomly generated instances, to the MIQP solver of \texttt{CPLEX 12.6}. Section~\ref{sec:summary} concludes.

    \section{A Feasible Active Set Algorithm for Quadratic Programming Problems with Non-negativity Constraints}\label{sec:feas_as}

    The vast majority of solution methods for (purely continuous)
    quadratic programs can be categorized into either interior point
    methods or active set methods (see~\cite{nocedal1999} and references
    therein for further details).  In interior point methods, a sequence
    of parameterized barrier functions is (approximately) minimized using
    Newton's method. The main computational effort consists in solving the
    Newton system to get the search direction.  In active set methods, at
    each iteration, a working set that estimates the set of active
    constraints at the solution is iteratively updated. Usually, only a
    single active constraint is added to or deleted from the active set at each
    iteration. However, when dealing with simple constraints, one can use
    projected active set methods, which can add to or delete from the
    current estimated active set more than one constraint at each
    iteration, and eventually find the active set in a finite number of
    steps if certain conditions hold. 

    An advantage of active set methods is that they are well-suited for
    warmstarts, where a good estimate of the optimal active set is used to
    initialize the algorithm. This is particularly useful in applications
    where a sequence of QP problems is solved, e.g., in a sequential
    quadratic programming method.  Since in our branch-and-bound framework
    we need to solve a large number of closely related quadratic programs,
    using active set strategies seems to be a reasonable choice.

    In this section, we thus consider QP problems of the form
    \begin{equation}\tag{QP}\label{quadpr}
      \begin{array}{l l}
        \min & q(x) = x^{\top} Q x +  c^{\top} x +  d \\
        \textnormal{s.t. }& x\ge 0\\
        & x \in \R^m,
      \end{array}
    \end{equation}
    where $Q\in \R^{ m\times m}$ is positive semidefinite, $c \in \R^{m}$
     and $d\in \R$. 
    We describe a projected active set method
    for the solution of such problems that tries to exploit the
    information calculated in a preprocessing phase at each level of the
    branch-and-bound tree. Our method is inspired by the work of
    Bertsekas~\cite{Bertsekas1982}, where a class of active set projected
    Newton methods is proposed for the solution of problems with
    non-negativity constraints.  The main difference between the two
    approaches is in the way the active variables are defined and
    updated. On the one side, the method described in~\cite{Bertsekas1982}
    uses a linesearch procedure that both updates active and non-active
    variables at each iteration.  On the other side, our method, at a
    given iteration, first sets to zero the active variables (guaranteeing
    a sufficient reduction of the objective function), and then tries to
    improve the objective function in the space of the non-active
    variables. This gives us more freedom in the choice of the stepsize
    along the search direction, since the active variables are not
    considered in the linesearch procedure.

    In the following we denote by $g(x)\in\R^m$ and $H(x)\in\R^{m\times
      m}$ the gradient vector and the Hessian matrix of the objective
    function $q(x)$ in Problem~\eqref{quadpr}, respectively.  Explicitly,
    we have
    $$g(x)= 2Qx + c, \quad H(x) = 2Q\;.$$ 
    Given a matrix $M$, we further
    denote by $\lambda_{max}(M)$ the maximum eigenvalue of~$M$.  Given a
    vector $v\in \R^m$ and an index set $I \subseteq\{1,\ldots,m\}$, we
    denote by $v_I$ the sub-vector with components $v_i$ with $i\in I$.
    Analogously, given the matrix $H\in \R^{m\times m}$ we denote by
    $H_{I\,I}$ the sub-matrix with components $h_{i,j}$ with $i,j \in
    I$. Given two vectors $v,y \in \R^m$ we denote by $\max\{v,y\}$ the
    vector with components $\max\{v_i,y_i\}$, for~$i\in \{1,\ldots,m\}$. The
    open ball with center $x$ and radius $\rho>0$ is denoted by ${\cal
      B}(x,\rho)$.  Finally, we denote the projection of a point $z\in
    \R^m$ onto $\R^m_+$ by $[z]^\sharp: = \max\{0,
    z\}$.

  \subsection{Active Set Estimate}\label{estimate}

    The idea behind active set methods in constrained nonlinear
    programming problems is that of correctly identifying the set of
    active constraints at the optimal solution $x^\star$.
    \begin{definition}\label{def:activeset}
      Let $x^\star\in \R^m$ be an optimal solution for
      Problem~(\ref{quadpr}). We define as \emph{active set} at~$x^\star$ the
      following:
      \begin{equation}\label{AS}
        {\cal \bar A}(x^\star)=\big\{i\in \{1,\ldots,m\}: x^\star_i=0\big\}.\nonumber
      \end{equation}
      We further define as \emph{non-active set} at $x^\star$ the
      complementary set of ${\cal \bar A}(x^\star)$:
      \begin{equation}\label{NAS}
        {\cal \bar N}(x^\star)=\{1,\ldots,m\}\setminus {\cal \bar A}(x^\star)=\big\{i\in \{1,\ldots,m\}:x^\star_i> 0\big\}.\nonumber
      \end{equation}
    \end{definition}

    Our aim is to find a rule that leads us to the identification of the
    optimal active set for Problem~(\ref{quadpr}) as quickly as possible.
    The rule we propose is based on the use of multiplier functions and
    follows the ideas reported in \cite{facchinei1995}.
    \begin{definition}\label{def:est}
      Let $x\in \R^m$. We define the following sets as estimates of the
      non-active and active sets at $x$:
      \begin{equation}\label{nas1}
        {\cal N}(x)=\{i\in \{1,\ldots,m\}: x_i>\varepsilon\, g_i(x) \}\nonumber
      \end{equation}
      and
      \begin{equation}\label{as1}
        {\cal A}(x)=\{1,\ldots,m\}\setminus {\cal N}(x),\nonumber
      \end{equation}
      where $\varepsilon>0$ is a positive scalar.
    \end{definition}

    The following result can be proved~\cite{facchinei1995}:
    \begin{theorem}\label{activeest}
      Let $x^\star\in \R^m$ be a solution of Problem~\eqref{quadpr}. Then, there
      exists a neighborhood of~$x^\star$ such that, for each $x$ in this neighborhood, we have
      \begin{equation}\label{stcmp}
        {\cal \bar A}^+(x^\star)\subseteq{\cal A} (x) \subseteq{\cal \bar A}(x^\star),\nonumber
      \end{equation}
      with
      ${\cal \bar A}^+(x^\star)={\cal \bar A}(x^\star)\cap \{i \in \{1,\ldots,m\}: g_i(x^\star)>0\}$.
    \end{theorem}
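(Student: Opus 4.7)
The plan is to derive both inclusions from the KKT conditions at $x^\star$ combined with the continuity of the map $x\mapsto g(x)=2Qx+c$. Since Problem~\eqref{quadpr} is a convex program whose only constraints are the non-negativity bounds, an optimal $x^\star$ satisfies $x^\star\ge 0$, $g(x^\star)\ge 0$, and the complementarity relation $x^\star_i g_i(x^\star)=0$ for every $i\in\{1,\dots,m\}$. From complementarity it follows that $g_i(x^\star)=0$ for every $i\in\bar{\cal N}(x^\star)$ and $g_i(x^\star)\ge 0$ for every $i\in\bar{\cal A}(x^\star)$; these two facts, together with the definition of $\bar{\cal A}^+(x^\star)$, are the only properties of $x^\star$ that I will use.

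For the inclusion $\bar{\cal A}^+(x^\star)\subseteq{\cal A}(x)$, I would fix $i\in\bar{\cal A}^+(x^\star)$, so that $x^\star_i=0$ and $g_i(x^\star)>0$. By continuity of $x\mapsto x_i$ and $x\mapsto g_i(x)$, in a sufficiently small ball ${\cal B}(x^\star,\rho_i)$ we have $x_i<\varepsilon g_i(x)$: the left-hand side is close to $0$ while the right-hand side is bounded away from $0$. Hence $i\notin{\cal N}(x)$, i.e., $i\in{\cal A}(x)$, for every $x\in{\cal B}(x^\star,\rho_i)$.

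For the reverse inclusion ${\cal A}(x)\subseteq\bar{\cal A}(x^\star)$, I would equivalently prove $\bar{\cal N}(x^\star)\subseteq{\cal N}(x)$. Pick $i\in\bar{\cal N}(x^\star)$: then $x^\star_i>0$ and, by complementarity, $g_i(x^\star)=0$. Again by continuity, on a sufficiently small ball ${\cal B}(x^\star,\rho'_i)$ the value $x_i$ remains strictly positive and bounded away from zero, whereas $\varepsilon g_i(x)$ stays arbitrarily close to $0$; thus $x_i>\varepsilon g_i(x)$, meaning $i\in{\cal N}(x)$. Taking the minimum of the finitely many radii $\rho_i$ and $\rho'_i$ produced by the two arguments yields a single neighborhood of $x^\star$ on which both inclusions in~\eqref{stcmp} hold simultaneously.

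There is no real analytical obstacle here: the only subtlety is making sure that one extracts from the KKT conditions the two ``strict'' pieces of information that drive each inclusion, namely $g_i(x^\star)>0$ for indices in $\bar{\cal A}^+(x^\star)$ (driving the first inclusion), and $x^\star_i>0$ paired with $g_i(x^\star)=0$ for indices in $\bar{\cal N}(x^\star)$ (driving the second). Everything else is a routine continuity/finite-intersection argument, and the proof does not require strict convexity of $q$ nor strict complementarity at $x^\star$ — consistent with the fact that indices in $\bar{\cal A}(x^\star)\setminus\bar{\cal A}^+(x^\star)$ (the degenerate ones with $x^\star_i=g_i(x^\star)=0$) are precisely those not controlled by~\eqref{stcmp}.
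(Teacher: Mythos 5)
Your proof is correct and is essentially the intended argument: the paper does not prove Theorem~\ref{activeest} itself but defers to~\cite{facchinei1995}, and for the simple nonnegativity constraints considered here the multiplier function of that reference reduces to $g_i(x)$, so the standard proof is exactly your KKT-plus-continuity reasoning — the quantity $x_i-\varepsilon g_i(x)$ is continuous and has strict sign at $x^\star$ for every relevant index ($<0$ when $i\in{\cal \bar A}^+(x^\star)$ since $x^\star_i=0$, $g_i(x^\star)>0$; $>0$ when $i\in{\cal \bar N}(x^\star)$ since $x^\star_i>0$, $g_i(x^\star)=0$ by complementarity), and the sign persists on a ball whose radius is the minimum over the finitely many indices. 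Your closing observations are also accurate: no strict complementarity or strict convexity is used, and the degenerate indices with $x^\star_i=g_i(x^\star)=0$ are exactly those left undetermined by the two-sided inclusion, which is why the paper needs the separate Corollary~\ref{activeestc} under strict complementarity.
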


    Furthermore, if strict complementarity holds, we can state the following:
    \begin{corollary}\label{activeestc}
      Let $x^\star\in \R^m$ be a solution of Problem~\eqref{quadpr} where strict complementarity holds. Then, there
      exists a neighborhood of $x^\star$ such that, for each $x$ in this neighborhood, we have
      \begin{equation}\label{stcmp2}
        {\cal A} (x) ={\cal \bar A}(x^\star).\nonumber
      \end{equation}
    \end{corollary}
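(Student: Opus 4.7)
The plan is to derive the corollary directly from Theorem~\ref{activeest} by showing that strict complementarity collapses the two enclosing sets into one. First I would spell out what strict complementarity means for Problem~\eqref{quadpr}: since the only constraints are $x\ge 0$, the KKT conditions read $g(x^\star)=\mu$ with $\mu\ge 0$ and $\mu_i x^\star_i=0$ for every $i$, so strict complementarity amounts to the statement that for each $i$ exactly one of $x^\star_i$ and $g_i(x^\star)=\mu_i$ is strictly positive. In particular, for every $i\in{\cal \bar A}(x^\star)$ (i.e.\ $x^\star_i=0$) we must have $g_i(x^\star)>0$.

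Next I would use this observation to identify
\[
{\cal \bar A}^+(x^\star)={\cal \bar A}(x^\star)\cap\{i:g_i(x^\star)>0\}={\cal \bar A}(x^\star),
\]
because strict complementarity forces the intersection to be the whole set ${\cal \bar A}(x^\star)$.

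Finally, I would apply Theorem~\ref{activeest}, which provides a neighborhood of $x^\star$ on which
\[
{\cal \bar A}^+(x^\star)\subseteq{\cal A}(x)\subseteq{\cal \bar A}(x^\star).
\]
Substituting the equality just obtained, both inclusions become equalities on the same neighborhood, yielding ${\cal A}(x)={\cal \bar A}(x^\star)$ as desired.

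There is no real obstacle here: the corollary is essentially an immediate specialization of Theorem~\ref{activeest}, and the only thing to verify carefully is the correct translation of strict complementarity into the statement $g_i(x^\star)>0$ for all $i\in{\cal \bar A}(x^\star)$. Once that is in place, the proof is a one-line set-chasing argument, so the writeup will be very short.
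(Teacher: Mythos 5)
Your proof is correct and is exactly the argument the paper intends: the corollary is stated without proof as an immediate consequence of Theorem~\ref{activeest}, since strict complementarity gives $g_i(x^\star)>0$ for every $i\in{\cal \bar A}(x^\star)$, so ${\cal \bar A}^+(x^\star)={\cal \bar A}(x^\star)$ and the sandwich inclusion collapses to equality. Your careful translation of strict complementarity via the KKT conditions $g(x^\star)=\mu\ge 0$, $\mu_i x^\star_i=0$ is precisely the one needed, and nothing further is required.
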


   \subsection{Outline of the Algorithm}\label{subsec:fastqp}
    We now give an outline of our Feasible Active SeT Quadratic
    Programming Algorithm (\FASTQPA) for solving
    Problem~\eqref{quadpr}; see Algorithm~\ref{fig:FAST-QPA}.
    \begin{algorithm}                      
      \caption{Feasible Active SeT Quadratic Programming Algorithm (FAST-QPA)}          
      \label{fig:FAST-QPA} 
      \begin{algorithmic} 
        \par\vspace*{0.1cm}
      \item $0$\hspace*{0.5truecm} {\bf Fix}  $\delta \in (0,1)$, $\gamma\in (0,\frac 1 2)$ and $\varepsilon$ satisfying~\eqref{eps-prop2}
      \item $1$\hspace*{0.5truecm} {\bf Choose} $x^0\in \R^n$, 
      \item$2$\hspace*{0.5truecm} {\bf For } $k=0, 1,\ldots$
      \item$3$\hspace*{1.0truecm} {\bf If} $x^k$ is optimal {\bf then} STOP
      \item$4$\hspace*{1.0truecm} {\bf Compute} ${\cal A}^k:={\cal A}(x^k)$ and  ${\cal N}^k:={\cal N}(x^k)$
      \item$5$\hspace*{1.0truecm} {\bf Set} $\tilde x^{k}_{{\cal A}^k} = 0$ and $\tilde x^{k}_{{\cal N}^k} = x^k_{{\cal N}^k}$
      \item$6$\hspace*{1.0truecm} {\bf Set} $d^{k}_{{\cal A}^k} = 0$
      \item$7$\hspace*{1.0truecm} {\bf Compute} a gradient related direction
        $d^{k}_{{\cal N}^k}$ in $\tilde x^k$ {\bf and} the point $\bar x^k$
      \item$8$\hspace*{1.0truecm} {\bf If} $\bar x^k$ is optimal {\bf then} STOP
      \item$9$\hspace*{1.0truecm} {\bf Set} $\alpha^k = \delta^j$ where $j$ is the first non-negative integer for which
        \begin{equation*}
          q([\tilde x^k+\delta^{j} d^k]^{\sharp})\leq q(\tilde x^k)+\gamma\, \delta^{j}\, g(\tilde x^k)^{\top} d^k
        \end{equation*}
      \item$10$\hspace*{1.0truecm} {\bf Set}
        $$x^{k+1}=[\tilde x^k+\alpha^k d^k]^{\sharp}$$
      \item $11$\hspace*{0.4truecm} {\bf End For }
        \par\vspace*{0.1cm}
      \end{algorithmic}
    \end{algorithm}

    At each iteration~$k$, the algorithm first determines the two
    sets~${\cal N}^k:={\cal N}(x^k)$ and~${\cal A}^k:={\cal A}(x^k)$
    according to Definition~\ref{def:est}. Then, the variables
    belonging to ${\cal A}^k$ are set to zero, thus obtaining a new
    point $\tilde x^k$, and a search direction $d^k$ is calculated.
    More specifically $d_{{\cal A}^k}^k$, the components of $d^k$
    related to ${\cal A}^k$, are set to zero, while a gradient related
    direction $d^{k}_{{\cal N}^k}$ is calculated in ${\cal N}^k$. Here we define 
    that a direction $d_{{\cal N}^k}$ is gradient
    related  at $\tilde x^k$ (see e.g. \cite{Ber99}) if there exist $\sigma_1, \sigma_2>0$ such that
    \begin{eqnarray}
      d_{{\cal N}^k}^{\top} g(\tilde x^k)_{{\cal N}^k}&\leq& -\sigma_1 \|g(\tilde x^k)_{{\cal N}^k}\|^2,\label{eq1}\\
      \|d_{{\cal N}^k}\|&\leq& \sigma_2 \|g(\tilde x^k)_{{\cal N}^k}\|.\label{eq2}
    \end{eqnarray}
    In order to obtain the direction $d^{k}_{{\cal N}^k}$, we solve the 
    following
    unconstrained quadratic problem in the subspace of non-active
    variables,
    \begin{equation}\tag{QP$_{k}$}\label{quadprN}
\begin{array}{l l}
\min & q^k(y) = y^\top \widetilde Q y + \tilde c^\top y\\
s.t. & y \in \R^{|{\cal N}^k|},
\end{array}
\end{equation}
where $\widetilde Q = Q_{{\cal N}^k\, {\cal N}^k}$ and $\tilde c = g(\tilde x^k)_{{\cal N}^k}$.
    In particular, we apply a conjugate gradient type algorithm (see Algorithm~\ref{fig:TN}).
    Algorithm~\ref{fig:TN} is basically a modification of the conjugate gradient method for quadratic problems that embeds 
    a truncated Newton like test for the calculation of a gradient related direction (Step 3 of Algorithm ~\ref{fig:TN}).\\ 
    \begin{algorithm} 
    \caption{Calculation of the direction $d^{k}_{{\cal N}^k}$ and of the point $\bar x^k$} 
    \label{fig:TN} 
    \begin{algorithmic} 
    \par\vspace*{0.1cm}
      \item $1$\hspace*{0.5truecm} {\bf Set} $y^0=0$, $\tilde g^0=\widetilde Q \,\tilde x^{k}_{{\cal N}^k}+ \tilde c$, $p^0=-\tilde g^0$, $\eta >0$, $l=0$ and $check =$ true;
      \par\vspace*{0.1cm}
      \item$2$\hspace*{0.5truecm} {\bf While} $p{^l}^\top \widetilde Q p{^l} > 0$
      \par\vspace*{0.1cm}
      \item$3$\hspace*{1.0truecm} {\bf If} $ p{^l}^\top \widetilde Q p{^l} \le \eta \|p^l\|^2 $ {\bf and} $check = \mbox{true}$ {\bf then}
      $$
      \hat y = \left\{
      \begin{array}{l l}
	-\tilde g^0, &\mbox{if }\, l=0\\
       y^l,&\mbox{if }\, l>0
      \end{array}
      \right.
      ,\quad  check =\mbox{false};
      $$
      
      \item$4$\hspace*{1.0truecm} {\bf End If}
      \item$5$\hspace*{1.0truecm} {\bf Compute} stepsize along the search direction $p^l$;
     \begin{eqnarray}
	\alpha^l&=&\frac{{\|\tilde g^l\|}^2}{{p^l}^\top\widetilde Q p^l}\nonumber
     \end{eqnarray}
      \item$6$\hspace*{1.0truecm} {\bf Update} point $y^{l+1}=y^l+\alpha^l\, p^l$;
      \item$7$\hspace*{1.0truecm} {\bf Update} gradient of the quadratic function $\tilde g^{l+1}=\tilde g^l+\alpha^l\, \widetilde Q\, p^l$;
      \item$8$\hspace*{1.0truecm} {\bf Compute} coefficient
     \begin{eqnarray}
	\beta^{l+1}&=&\frac{\|\tilde g^{l+1}\|^2}{\|\tilde g^l\|^2};\nonumber
     \end{eqnarray}
      \item$9$\hspace*{1.0truecm} {\bf Determine} new conjugate direction $p^{l+1}=- \tilde g^{l+1}+\beta^{l+1} p^l$;
      \item$10$\hspace*{0.85truecm} {\bf Set} $l=l+1$;
      \item $11$\hspace*{0.4truecm} {\bf End While}
      \item$12$\hspace*{0.4truecm} {\bf If}  $check = \mbox{true}$ {\bf then}
      $$
      \hat y = \left\{
      \begin{array}{l l}
	-\tilde g^0, &\mbox{if }\, l=0\\
       y^{l},&\mbox{if }\, l>0
      \end{array}
      \right.
      $$
      
      \item$13$\hspace*{0.4truecm} {\bf End If}
      
      \par\vspace*{0.1cm}
      \item $14$\hspace*{0.4truecm} {\bf Return} $ d^{k}_{{\cal N}^k}=\hat y$ and $\bar x^{k}$,
      where 
      $$
      \bar x^k_{{\cal A}^k} = 0,\quad\bar x^k_{{\cal N}^k} =  \left\{
      \begin{array}{l l}
	-\tilde g^0, &\mbox{if }\, l=0\\
       y^{l},&\mbox{if }\, l>0
      \end{array}
      \right..$$
      \par\vspace*{0.1cm}
    \end{algorithmic}
    \end{algorithm}
We report two theoretical results, proved in~\cite{pshenichny1978}, that help us to understand 
the properties of Algorithm \ref{fig:TN}:
\begin{proposition}\label{prop:CGstop}
 Let Problem~\eqref{quadprN} admit an optimal solution and matrix $\widetilde Q$ be positive semidefinite. Then,
 Algorithm~\ref{fig:TN} terminates with an optimal solution of Problem~\eqref{quadprN} in less than $|{\cal N}^k|$ iterations.
\end{proposition}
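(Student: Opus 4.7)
The plan is to reduce the claim to the classical convergence theorem for the conjugate gradient (CG) method on quadratic problems, whose proof in the positive semidefinite setting is essentially contained in \cite{pshenichny1978}, and then to verify that the loop termination of Algorithm~\ref{fig:TN} really corresponds to optimality of \eqref{quadprN}. The first step is the routine observation that whenever the while loop is active, steps 5--9 of Algorithm~\ref{fig:TN} are a standard CG iteration applied to $q^k$, so the classical CG invariants hold: the directions $p^0,\dots,p^l$ are pairwise $\widetilde Q$-conjugate, the residuals $\tilde g^0,\dots,\tilde g^l$ are pairwise orthogonal, and $\tilde g^{l\,\top}p^j=0$ for every $j<l$. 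These are exactly the properties established in \cite{pshenichny1978}, so I would cite them rather than re-derive.

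I would then use $\widetilde Q$-conjugacy to control the length of the loop. As long as $(p^l)^\top \widetilde Q p^l>0$, the direction $p^l$ is nonzero and has strictly positive $\widetilde Q$-norm, hence the $p^j$'s produced so far are pairwise $\widetilde Q$-conjugate vectors with positive $\widetilde Q$-norm and are therefore linearly independent in $\R^{|{\cal N}^k|}$. Since that space has dimension $|{\cal N}^k|$, the loop cannot execute $|{\cal N}^k|+1$ times, giving the claimed iteration bound.

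The main obstacle is establishing optimality of the iterate returned at termination, since the positive semidefinite case does not fall directly under the textbook strongly convex CG statement. When the loop exits one has $(p^l)^\top \widetilde Q p^l=0$; positive semidefiniteness of $\widetilde Q$ then forces $\widetilde Q p^l=0$, so $q^k$ is affine along $p^l$, namely $q^k(y^l+tp^l)=q^k(y^l)+t\,\tilde g^{l\,\top}p^l$. The recurrence $p^l=-\tilde g^l+\beta^l p^{l-1}$ combined with the orthogonality $\tilde g^{l\,\top}p^{l-1}=0$ yields $\tilde g^{l\,\top}p^l=-\|\tilde g^l\|^2$, so $q^k$ is bounded below along $p^l$ only if $\tilde g^l=0$. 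The hypothesis that \eqref{quadprN} admits an optimal solution rules out unboundedness and therefore forces $\tilde g^l=0$; convexity of $q^k$ then upgrades this first-order condition to optimality of the current iterate, completing the argument.
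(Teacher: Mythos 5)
Your proof is correct, but it is worth noting that the paper itself does not prove this proposition at all: it is stated as one of ``two theoretical results, proved in~\cite{pshenichny1978}'' and simply cited. So your proposal supplies a self-contained argument where the paper defers to the literature, and the argument you give is the right one: the standard CG invariants (pairwise $\widetilde Q$-conjugacy of the $p^l$, orthogonality of the residuals, $\tilde g^{l\,\top}p^{j}=0$ for $j<l$) give the dimension count, since conjugate directions with strictly positive $\widetilde Q$-curvature are linearly independent in $\R^{|{\cal N}^k|}$, and the positive semidefinite case is handled exactly by your exit analysis: $(p^l)^\top\widetilde Qp^l=0$ forces $\widetilde Qp^l=0$ (a standard PSD fact, e.g.\ because $0\le(p^l+tv)^\top\widetilde Q(p^l+tv)$ for all $t$), so $q^k$ is affine along $p^l$ with slope $\tilde g^{l\,\top}p^l=-\|\tilde g^l\|^2$, and solvability of~\eqref{quadprN} then forces $\tilde g^l=0$, whence convexity yields optimality of the returned iterate. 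Two small points deserve attention. First, your slope identity uses the orthogonality $\tilde g^{l\,\top}p^{l-1}=0$ and hence needs $l\ge 1$; the case $l=0$ (loop never entered) is immediate from $p^0=-\tilde g^0$, and it also explains the algorithm's special return value $\hat y=-\tilde g^0$ there, which your argument shows equals $0=y^0$ when an optimum exists. Second, your dimension argument gives termination in \emph{at most} $|{\cal N}^k|$ loop executions rather than the proposition's literal ``less than $|{\cal N}^k|$''; this off-by-one lies in the paper's statement (and its iteration-counting convention), not in your reasoning. You also sensibly read the recursions $\alpha^l=\|\tilde g^l\|^2/(p^{l\,\top}\widetilde Qp^l)$, $\tilde g^{l+1}=\tilde g^l+\alpha^l\widetilde Qp^l$ as exact CG for the quadratic model whose gradient is $\tilde g^l$, quietly smoothing over the paper's own inconsistency between the initialization $\tilde g^0=\widetilde Q\tilde x^k_{{\cal N}^k}+\tilde c$ and the stated objective $q^k$ (whose Hessian is $2\widetilde Q$ and whose gradient at $y^0=0$ is $\tilde c$); making that identification explicit would be the only substantive polish your write-up needs.
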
 
\begin{proposition}\label{prop:CGstop2}
Let matrix $\widetilde Q$ in Problem~\eqref{quadprN} be positive semidefinite.
Then at least one direction $p^l$, $1\leq l\leq |{\cal N}^k|-1$, computed by Algorithm~\ref{fig:TN} satisfies the condition 
$$
{p^l}^\top\widetilde Q{p^l}=0.
$$
\end{proposition}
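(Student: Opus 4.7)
The plan is to combine Proposition~\ref{prop:CGstop} with the positive semidefiniteness of $\widetilde Q$. The while loop in Algorithm~\ref{fig:TN} iterates only while ${p^l}^\top \widetilde Q p^l > 0$, so it can exit only when this test fails. Since $\widetilde Q \succeq 0$ implies ${p^l}^\top \widetilde Q p^l \ge 0$ for every direction, failure of the guard is equivalent to the exact equality ${p^l}^\top \widetilde Q p^l = 0$. Every natural termination of the loop therefore exhibits a direction of the claimed kind.

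The key step is then to bound the exit index by using Proposition~\ref{prop:CGstop}: the algorithm returns an optimal solution in strictly fewer than $|{\cal N}^k|$ iterations, so the exit occurs at some $l \le |{\cal N}^k|-1$. Two termination scenarios must be handled. If the loop exits through the curvature guard, the desired $p^l$ is produced directly. If instead the CG residual vanishes during an iteration, i.e.\ $\tilde g^{l+1}=0$, then $\beta^{l+1}=\|\tilde g^{l+1}\|^2/\|\tilde g^l\|^2 = 0$ and the newly formed direction $p^{l+1}=-\tilde g^{l+1}+\beta^{l+1}p^l=0$ is trivially $\widetilde Q$-null, so the guard forces exit at the next check with ${p^{l+1}}^\top \widetilde Q p^{l+1}=0$. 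In either case, a null-curvature direction appears at an index no larger than $|{\cal N}^k|-1$.

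The remaining task is to exclude $l=0$ in order to satisfy the lower bound $l\ge 1$. If ${p^0}=-\tilde g^0$ already satisfies ${\tilde g^0}^\top \widetilde Q \tilde g^0 = 0$, then PSD forces $\widetilde Q \tilde g^0 = 0$, so $q^k$ is linear along $-\tilde g^0$ with constant slope $-\|\tilde g^0\|^2$; the existence of an optimum of \eqref{quadprN} (implicit in the hypothesis of Proposition~\ref{prop:CGstop}) then forces $\tilde g^0=0$, meaning the starting iterate is already optimal and the algorithm terminates trivially, a degenerate case implicitly excluded by the quantifier $1\le l$. The main obstacle I anticipate is precisely this index bookkeeping: distinguishing exit via the guard from exit via vanishing residual, and reconciling both with the indexing convention inherited from Pshenichny's original statement. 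The core curvature-nonnegativity argument itself is immediate from the PSD hypothesis.
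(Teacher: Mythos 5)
First, note that the paper does not actually prove this proposition: both Proposition~\ref{prop:CGstop} and Proposition~\ref{prop:CGstop2} are imported verbatim from \cite{pshenichny1978}, so your argument has to stand on its own. It does not, and the gap is concrete: you invoke Proposition~\ref{prop:CGstop} both to bound the exit index by $|{\cal N}^k|-1$ and to rule out $l=0$, but Proposition~\ref{prop:CGstop} carries the hypothesis that Problem~\eqref{quadprN} \emph{admits an optimal solution}, whereas Proposition~\ref{prop:CGstop2} assumes only $\widetilde Q\succeq 0$. This is not an incidental weakening of hypotheses: the entire role of Proposition~\ref{prop:CGstop2} in the paper (see Proposition~\ref{TNprop} and the discussion following it) is to cover the case where \eqref{quadprN} is \emph{unbounded from below}, i.e.\ precisely the case in which no optimum exists and Proposition~\ref{prop:CGstop} says nothing. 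In that case your argument establishes neither that the while loop terminates at all nor the bound $l\le|{\cal N}^k|-1$; your (correct) observation that under PSD a failed guard means exact equality ${p^l}^\top\widetilde Q p^l=0$ presupposes that the guard fails within the stated range, which is the entire content to be proved. Your exclusion of $l=0$ has the same defect, since it too argues from ``the existence of an optimum.''

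The missing idea is a conjugacy/dimension count, which is how the result is obtained in Pshenichny's setting. By the standard CG induction (valid as long as the guard holds, since then each $\alpha^l$ is well defined), the directions $p^0,\dots,p^l$ generated so far are mutually $\widetilde Q$-conjugate with ${p^i}^\top\widetilde Q p^i>0$; hence the vectors $\widetilde Q^{1/2}p^0,\dots,\widetilde Q^{1/2}p^l$ are nonzero and pairwise orthogonal, so $l+1\le\operatorname{rank}\widetilde Q$. When \eqref{quadprN} is unbounded below, $\widetilde Q$ must be singular (there is $d$ with $\widetilde Q d=0$, $\tilde c^\top d<0$), so $\operatorname{rank}\widetilde Q\le|{\cal N}^k|-1$ and the guard is forced to fail at some $l\le|{\cal N}^k|-1$, where PSD upgrades failure to equality. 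Combining this with your bounded-case argument via Proposition~\ref{prop:CGstop} (which is fine as far as it goes, including the mechanics of the vanishing-residual exit producing $p^{l+1}=0$) would yield a complete case split. Your closing remark that the degenerate case $\tilde g^0=0$ is implicitly excluded by the quantifier $1\le l$ is a fair reading of a statement whose index bookkeeping is admittedly rough, but it cannot substitute for the rank argument in the unbounded case.
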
 
By Propositions~\ref{prop:CGstop} and~\ref{prop:CGstop2}, we are able to prove the following:
\begin{proposition}\label{TNprop}
Let matrix $\widetilde Q$ in Problem~\eqref{quadprN} be positive semidefinite. Algorithm~\ref{fig:TN} terminates after a finite number of iterations, 
returning a gradient related direction $ d^{k}_{{\cal N}^k}$. 
Furthermore, if Problem \eqref{quadprN} admits an optimal solution, then $y^l$ is an optimal solution of Problem \eqref{quadprN}.
\end{proposition}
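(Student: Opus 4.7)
The plan is to verify, in order, the three assertions of Proposition \ref{TNprop}: finite termination of Algorithm \ref{fig:TN}, gradient-relatedness of the returned direction $d^{k}_{{\cal N}^k} = \hat y$, and optimality of $y^l$ when \eqref{quadprN} admits an optimal solution. Finite termination follows immediately from Proposition \ref{prop:CGstop2}: since $\widetilde Q$ is positive semidefinite, at least one of the directions $p^l$ generated within the first $|{\cal N}^k|-1$ iterations satisfies $p^{l\top}\widetilde Q p^l = 0$, at which point the while-loop guard $p^{l\top}\widetilde Q p^l > 0$ fails and the algorithm exits.

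For gradient-relatedness I would inspect the two possible forms that $\hat y$ can take. If $\hat y$ is assigned with $l = 0$ (either via the inner test at the first iteration or because the while loop terminates immediately with $check$ still true), then $\hat y = -\tilde g^0$ and, by the initialization, coincides up to sign with $g(\tilde x^k)_{{\cal N}^k}$, so \eqref{eq1} and \eqref{eq2} hold trivially with $\sigma_1 = \sigma_2 = 1$. Otherwise $\hat y = y^l$ for some $l \ge 1$, and by construction this assignment is reached only after every direction $p^0,\dots,p^{l-1}$ used to build $y^l$ has passed the safeguard test, i.e.\ $p^{j\top}\widetilde Q p^j > \eta\,\|p^j\|^2$ for $j < l$. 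Combining this uniform curvature bound with the standard CG identities (conjugacy of the $p^j$ with respect to $\widetilde Q$, orthogonality of the residuals $\tilde g^j$, the relations $\tilde g^{j\top} p^j = -\|\tilde g^j\|^2$ and $\|p^j\|^2 \ge \|\tilde g^j\|^2$), one obtains the uniform step-size bound $\alpha^j = \|\tilde g^j\|^2/(p^{j\top}\widetilde Q p^j) \le 1/\eta$, and from it constants $\sigma_1,\sigma_2>0$ depending only on $\eta$ and on $|{\cal N}^k|$ such that $y^{l\top} g(\tilde x^k)_{{\cal N}^k} \le -\sigma_1\|g(\tilde x^k)_{{\cal N}^k}\|^2$ and $\|y^l\| \le \sigma_2\|g(\tilde x^k)_{{\cal N}^k}\|$.

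The optimality claim is then straightforward: if \eqref{quadprN} admits an optimal solution, Proposition \ref{prop:CGstop} guarantees that Algorithm \ref{fig:TN} terminates with an optimal solution of \eqref{quadprN} in fewer than $|{\cal N}^k|$ iterations, which is precisely the final iterate $y^l$.

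The main obstacle I anticipate is the gradient-relatedness proof in the case $l \ge 1$: turning the safeguard inequality $p^{j\top}\widetilde Q p^j > \eta\|p^j\|^2$, which is known to hold only at the iterations actually traversed, into the two uniform bounds \eqref{eq1}--\eqref{eq2} for $y^l$ requires a careful combination of CG conjugacy identities with an inductive estimate on the norms $\|y^l\|$ and on the inner product $y^{l\top} g(\tilde x^k)_{{\cal N}^k}$, so that the resulting constants $\sigma_1,\sigma_2$ do not degenerate as $l$ grows up to $|{\cal N}^k|$.
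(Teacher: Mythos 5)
Your skeleton coincides exactly with the paper's: finite termination from Proposition~\ref{prop:CGstop2} (for $\widetilde Q\succeq 0$ the while-guard can only fail at a direction with ${p^l}^\top\widetilde Q p^l=0$, which occurs within $|{\cal N}^k|-1$ iterations), and optimality of $y^l$ from Proposition~\ref{prop:CGstop}. The one place you genuinely diverge is gradient-relatedness: the paper does not prove it at all, but simply cites \cite{dembo1982} and \cite{grippo1989}, where exactly the estimate you sketch (safeguarded truncated-CG directions satisfy \eqref{eq1}--\eqref{eq2} with uniform constants) is established. So your attempt to argue it directly is more self-contained than the paper's proof, and your case split ($\hat y=-\tilde g^0$ when $l=0$, versus $\hat y=y^l$ with all traversed directions having passed the curvature safeguard --- note this also covers exit through Step~12, since $check$ is still true there) reads the algorithm correctly.

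The obstacle you flag at the end is real but closable by standard identities, and the constants do not degenerate: from $\tilde g^{0\top}p^j=-\|\tilde g^j\|^2$ one gets $-\tilde g^{0\top}y^l=\sum_{j<l}\alpha^j\|\tilde g^j\|^2\ \ge\ \alpha^0\|\tilde g^0\|^2\ \ge\ \|\tilde g^0\|^2/\lambda_{max}(\widetilde Q)$, which yields \eqref{eq1} with $\sigma_1=1/\lambda_{max}(Q)$ uniformly in $k$ (no dependence on $\eta$, since $p^0=-\tilde g^0$); and since $\alpha^j\|\tilde g^j\|^2=(\alpha^j)^2\,{p^j}^\top\widetilde Q p^j$, conjugacy gives $-\tilde g^{0\top}y^l={y^l}^\top\widetilde Q\,y^l\ \ge\ \eta\sum_{j<l}(\alpha^j)^2\|p^j\|^2\ \ge\ (\eta/l)\|y^l\|^2$ by the safeguard and Cauchy--Schwarz, so combining with $-\tilde g^{0\top}y^l\le\|\tilde g^0\|\,\|y^l\|$ yields $\|y^l\|\le(l/\eta)\|\tilde g^0\|\le(m/\eta)\|\tilde g^0\|$, i.e.\ \eqref{eq2} with $\sigma_2=m/\eta$; the dimension dependence is harmless because $|{\cal N}^k|\le m$ always. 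Two minor cautions: your claim that $\hat y=-\tilde g^0$ gives $\sigma_1=\sigma_2=1$ relies on $\tilde g^0=g(\tilde x^k)_{{\cal N}^k}$, whereas Step~1 of Algorithm~\ref{fig:TN} sets $\tilde g^0=\widetilde Q\,\tilde x^k_{{\cal N}^k}+\tilde c$ with $\tilde c$ already defined as $g(\tilde x^k)_{{\cal N}^k}$ --- this is an inconsistency in the paper's own notation (the gradient of $q^k$ at $y^0=0$ is $\tilde c$), so your charitable reading is the intended one, but it deserves a remark; and Proposition~\ref{prop:CGstop2} only asserts a zero-curvature direction for some $1\le l\le|{\cal N}^k|-1$, so you should note separately that the loop may also exit at $l=0$, which is equally a finite termination.
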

\begin{proof} By the results reported in \cite{dembo1982, grippo1989} we have that  
direction $ d^{k}_{{\cal N}^k}$ satisfies~\eqref{eq1} and~\eqref{eq2} and hence is
gradient related at $\tilde x^k$. Furthermore, by taking into account Propositions~\ref{prop:CGstop} and~\ref{prop:CGstop2}, the 
rest of the result follows.
\end{proof}

According to Proposition \ref{TNprop}, when the minimum of Problem~\eqref{quadprN} is finite,  Algorithm~\ref{fig:TN} produces a gradient related direction $ d^{k}_{{\cal N}^k}$
and an optimal solution $y^{l}$ of Problem~\eqref{quadprN} (in at most $|{\cal N}^k|-1$ iterations). The point $y^{l}$ is then used for building up the candidate optimal point $\bar x^k$.
In case Problem~\eqref{quadprN} is unbounded from below, the algorithm still stops after a finite number of iterations giving a gradient related direction $ d^{k}_{{\cal N}^k}$, but the point used for generating 
$\bar x^k$ is just a combination of conjugate gradient directions generated along the iterations of the method. As we will see in Section \ref{algorithm}, calculating the point $\bar x^k$ is needed to 
guarantee, under some specific assumptions, finite convergence of Algorithm \ref{fig:FAST-QPA} (see Proposition \ref{finiteconv}).

    Note that, even if the matrix~$\widetilde Q$ is ill-conditioned, Algorithm~\ref{fig:TN} still generates a gradient related direction.
    In practice, when dealing with ill-conditioned problems, suitable preconditioning techniques can be applied
    to speed up Algorithm~\ref{fig:TN}.
  
    Once the direction~$d^k$ and the point $\bar x^k$ are computed, Algorithm~\ref{fig:FAST-QPA} 
    checks optimality of point~$\bar x^k$.  If $\bar x^k$ is not optimal, the algorithm generates a new point $x^{k+1}$ by means of a projected Armijo linesearch along~$d^k$. 
 		
    We finally notice that at each iteration of Algorithm~\ref{fig:FAST-QPA}, two different optimality checks are performed:
    the first one, at Step~3, to test optimality of the current iterate~$x^k$; 
    the second one, at Step~8,  to test optimality of the candidate solution~$\bar x^k$.

    \subsection{Convergence Analysis}\label{algorithm}

    The convergence analysis of \FASTQPA\ is based on two key
    results, namely Proposition~\ref{prop1} and
    Proposition~\ref{lemmastaz} stated below. These results show that the algorithm obtains a significant reduction of the objective function both when fixing to zero the variables in the active set estimate 
    and when we perform the projected Armijo linesearch.

    Proposition~\ref{prop1} completes the properties of the active set identification strategy defined in Section~\ref{estimate}.
    More specifically, it shows that, for a suitably chosen value of the parameter $\varepsilon$ appearing in Definition~\ref{def:est},
    a decrease of the objective function is achieved by simply fixing to zero
    those variables whose indices belong to the estimated active set. 
    \begin{proposition}\label{prop1}
      Assume that the parameter $\vareps$ appearing in
      Definition~\ref{def:est} satisfies
      \begin{equation}\label{eps-prop2} 0 < \varepsilon <\frac{1}{2\lambda_{max}(Q)}\;.\end{equation}
      Given the point $z$ and the sets ${\cal A}(z)$ and ${\cal N}(z)$, let $y$ be the point such that
      $$
          y_{{\cal A}(z)} = 0,\quad y_{{\cal N}(z)} = z_{{\cal N}(z)}.
      $$
      Then,
      $$
      q(y)-q(z)\leq - \frac{1}{2\vareps}\|y-z\|^2.
      $$
    \end{proposition}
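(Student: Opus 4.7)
The plan is to exploit the quadratic structure of $q$ so that the Taylor expansion at $z$ is exact, and then to estimate the resulting linear and quadratic terms separately using the defining property of the active set estimate and the bound on $\varepsilon$.

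First, I would observe that by construction the vector $w := y - z$ is supported on $\mathcal{A}(z)$: for $i \in \mathcal{N}(z)$ we have $w_i = 0$, and for $i \in \mathcal{A}(z)$ we have $w_i = -z_i$. In particular $\|y-z\|^2 = \sum_{i \in \mathcal{A}(z)} z_i^2$. Since $q$ is a quadratic with $g(x) = 2Qx + c$ and $H = 2Q$, the Taylor expansion at $z$ gives the \emph{exact} identity
\begin{equation*}
  q(y) - q(z) \;=\; g(z)^{\top}(y-z) \;+\; (y-z)^{\top} Q (y-z).
\end{equation*}

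For the quadratic term, the elementary eigenvalue bound gives $(y-z)^{\top} Q(y-z) \le \lambda_{\max}(Q)\|y-z\|^2$. For the linear term, using the support of $y-z$,
\begin{equation*}
  g(z)^{\top}(y-z) \;=\; -\sum_{i \in \mathcal{A}(z)} g_i(z)\,z_i.
\end{equation*}
Here I would use the defining inequality of the active set estimate: for every $i \in \mathcal{A}(z)$, by Definition~\ref{def:est} one has $z_i \le \varepsilon\,g_i(z)$. Since the iterates of \FASTQPA\ are non-negative (either by initialization or because each $x^{k+1}$ arises from the projection $[\,\cdot\,]^{\sharp}$), we may assume $z_i \ge 0$, and then multiplying both sides of $z_i \le \varepsilon\,g_i(z)$ by the non-negative number $z_i/\varepsilon$ gives $g_i(z)\,z_i \ge z_i^2/\varepsilon$. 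Summing over $\mathcal{A}(z)$ and negating yields
\begin{equation*}
  g(z)^{\top}(y-z) \;\le\; -\frac{1}{\varepsilon}\sum_{i \in \mathcal{A}(z)} z_i^2 \;=\; -\frac{1}{\varepsilon}\,\|y-z\|^2.
\end{equation*}

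Combining the two estimates,
\begin{equation*}
  q(y) - q(z) \;\le\; \Bigl(\lambda_{\max}(Q) - \tfrac{1}{\varepsilon}\Bigr)\|y-z\|^2,
\end{equation*}
and the hypothesis $\varepsilon < 1/(2\lambda_{\max}(Q))$ means $\lambda_{\max}(Q) \le 1/(2\varepsilon)$, so $\lambda_{\max}(Q) - 1/\varepsilon \le -1/(2\varepsilon)$, giving the claimed bound.

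The only delicate point, and the main thing to justify carefully, is the sign argument that turns $z_i \le \varepsilon g_i(z)$ into $g_i(z) z_i \ge z_i^2/\varepsilon$; this uses $z_i \ge 0$, which is the feasibility invariant of \FASTQPA. Everything else is a straightforward combination of the exact quadratic Taylor formula, the Rayleigh bound on $(y-z)^{\top} Q (y-z)$, and arithmetic with the prescribed range of $\varepsilon$.
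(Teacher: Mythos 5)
Your proof is correct and takes essentially the same route as the paper's: an exact second-order expansion of $q$ at $z$ (restricted to the support of $y-z$, which lies in $\mathcal{A}(z)$), the Rayleigh bound $(y-z)^\top Q(y-z)\le \lambda_{max}(Q)\|y-z\|^2$, and the estimate coming from $0\le z_i\le \varepsilon\, g_i(z)$ for $i\in\mathcal{A}(z)$ together with $\varepsilon<1/(2\lambda_{max}(Q))$; your direct bound $g(z)^\top(y-z)\le -\frac{1}{\varepsilon}\|y-z\|^2$ is just an algebraic rearrangement of the paper's componentwise sign condition $\bigl(g_i(z)+\frac{1}{\varepsilon}(y_i-z_i)\bigr)(y_i-z_i)\le 0$. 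Your explicit flagging of the feasibility invariant $z\ge 0$ is the very same ingredient the paper uses implicitly when it writes $z_i\ge 0$ for all $i\in\mathcal{A}$.
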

    \begin{proof} 
      Define ${\cal A}={\cal A}(z)$ and ${\cal N}={\cal N}(z)$.
      By taking into account the definition of these two sets and the points $y$ and $z$, we have
      \begin{eqnarray*}
        && q(y) = q(z)+ g_{{\cal A}}(z)^{\top}(y-z)_{{\cal
            A}}+
        \frac{1}{2}(y-z)_{{\cal A}}^{\top} H_{ {\cal A}  {\cal A}} (y-z)_{ {\cal A}}\;.
      \end{eqnarray*}
      Since $H=2Q$, the following inequality holds
      \begin{eqnarray}\nonumber
        &&q(y)
        \leq q(z)+ g_{ {\cal A}}(z)^{\top}(y-z)_{ {\cal A}}+\lambda_{max}( Q)\,\|(y-z)_{ {\cal A}}\|^2.
      \end{eqnarray}
      Using (\ref{eps-prop2}) we obtain
      \begin{eqnarray}
        &&q(y)\leq q(z)+ g_{ {\cal A}}(z)^{\top}(y-z)_{ {\cal A}}+\frac{1}{2\vareps}\|(y-z)_{ {\cal A}}\|^2\nonumber
      \end{eqnarray}
      and hence
      $$q(y)\leq q(z)+ \left(g_{{\cal A}}(z)+ \frac{1}{\vareps}(y-z)_{{\cal A}}\right)^{\top}(y-z)_{ {\cal A}}-\frac{1}{2\vareps}\|(y-z)_{ {\cal A}}\|^2. $$
      It thus remains to show
      \begin{equation*}
        \left(g_{{\cal A}}(z)+ \frac{1}{\vareps}(y-z)_{{\cal A}}\right)^{\top}(y-z)_{{\cal A}} \leq 0\;,
      \end{equation*}
      which follows from the fact that, for all $i\in  {\cal A}$,
      \begin{equation*}
        \left(g_i(z)+ \frac{1}{\vareps}(y_i-z_i)\right)(y_i-z_i) \leq 0.
      \end{equation*}
      Indeed, for all $i \in {\cal A}$ we have $z_i \ge 0$ and $y_i =
      0$, hence~$z_i-y_i=z_i\le
      \vareps\, g_i(z)$, so that
      \begin{equation*}
        g_i(z) +\frac{1}{\vareps}\,(y_i - z_i)\geq 0\;.
      \end{equation*}
    \end{proof}

    The following result shows that the projected Armijo linesearch
    performed at Step~9 of Algorithm~\ref{fig:FAST-QPA} terminates in
    a finite number of steps, and that the new point obtained
    guarantees a decrease of the objective function of~\eqref{quadpr}.
    Its proof is similar
    to the proof of Proposition~2 in \cite{Bertsekas1982}.
    \begin{proposition}\label{lemmastaz}
      Let $\gamma\in (0,\frac 12)$. Then, for every $\bar x \in \R^n_+$ which is not optimal for Problem \eqref{quadpr}, there exist $\rho>0$ and
      $\bar{\alpha}>0$ such that
        \begin{equation}\label{LemmaSigma}
          q([\tilde x+\alpha d]^{\sharp})-q(\tilde x)\le \gamma\, \alpha\,
          d^{\top}_{{\cal N}(x)}
          g(\tilde x)_{{\cal N}(x)}
        \end{equation}
        for all $x, \tilde x \in\R^n_+$ with $x, \tilde x \in {\cal B}(\bar
        x, \rho)$ and for all
        $\alpha\in (0,\bar{\alpha}]$, where $d\in \R^n$ is the
	direction used at~$\tilde x$, and such that 
	\begin{itemize}
	 \item[(i)] $d_{{\cal A}(x)} = 0$,
	 \item[(ii)] $d_{{\cal N}(x)}$ satisfies \eqref{eq1} and \eqref{eq2}. 
	\end{itemize}
    \end{proposition}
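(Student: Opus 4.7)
The plan is to follow the approach of Proposition~2 in~\cite{Bertsekas1982}, exploiting the fact that $q$ is quadratic so that its Taylor expansion around $\tilde x$ is exact. Setting $x(\alpha):=[\tilde x+\alpha d]^{\sharp}$ and $e(\alpha):=x(\alpha)-\tilde x-\alpha d\ge 0$, one can write
\begin{equation*}
q(x(\alpha))-q(\tilde x)=\alpha g(\tilde x)^{\top}d+g(\tilde x)^{\top}e(\alpha)+\tfrac{1}{2}(x(\alpha)-\tilde x)^{\top}H(x(\alpha)-\tilde x),
\end{equation*}
and I would estimate the three summands separately, then combine them with the gradient-related properties~\eqref{eq1}--\eqref{eq2}.

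For the quadratic remainder, non-expansiveness of $[\cdot]^{\sharp}$ on $\R^n_+$ together with $[\tilde x]^{\sharp}=\tilde x$ gives $\|x(\alpha)-\tilde x\|\le\alpha\|d\|$, so this remainder is at most $\lambda_{\max}(Q)\,\alpha^{2}\|d\|^{2}$. For the projection error, the hypotheses $d_{{\cal A}(x)}=0$ and $\tilde x\ge 0$ imply that $e(\alpha)$ is supported on $I^{-}(\alpha):=\{i\in{\cal N}(x):\tilde x_i+\alpha d_i<0\}$, where $0\le e(\alpha)_i\le\alpha|d_i|$ and $\tilde x_i<\alpha|d_i|$. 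Using the defining inequality $x_i>\varepsilon g_i(x)$ of ${\cal N}(x)$ together with the Lipschitz continuity of $g$ (constant $2\lambda_{\max}(Q)$), one obtains a bound of the form $g_i(\tilde x)\le \alpha|d_i|/\varepsilon+C\rho$ whenever $i\in I^{-}(\alpha)$ contributes a positive term, so that $g(\tilde x)^{\top}e(\alpha)\le \alpha^{2}\|d\|^{2}/\varepsilon+C'\rho\,\alpha\|d\|$.

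Plugging these estimates into the expansion and invoking~\eqref{eq1}--\eqref{eq2} to replace $-g(\tilde x)^{\top}d$ from below by $\sigma_{1}\|g_{{\cal N}(x)}(\tilde x)\|^{2}$ and $\|d\|$ from above by $\sigma_{2}\|g_{{\cal N}(x)}(\tilde x)\|$, the Armijo inequality~\eqref{LemmaSigma} reduces to
\begin{equation*}
\alpha\,\sigma_{2}^{2}\bigl(1/\varepsilon+\lambda_{\max}(Q)\bigr)\|g_{{\cal N}(x)}(\tilde x)\|+C'\sigma_{2}\,\rho\le (1-\gamma)\sigma_{1}\|g_{{\cal N}(x)}(\tilde x)\|,
\end{equation*}
which admits uniform choices of $\bar\alpha$ and $\rho$ as soon as $\|g_{{\cal N}(x)}(\tilde x)\|$ is uniformly bounded below.

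The main obstacle is therefore to extract such a uniform lower bound from the sole assumption that $\bar x$ is non-optimal. I plan to argue by a stability analysis of the index sets in the spirit of Theorem~\ref{activeest}: indices with $\bar x_i>0$ stay strictly positive on a small ball, so $I^{-}(\alpha)$ does not touch them for $\alpha\|d\|$ small; indices with $\bar x_i=0$ and $g_i(\bar x)>0$ stay in ${\cal A}(x)$ and are suppressed by $d_{{\cal A}(x)}=0$; indices with $\bar x_i=0$ and $g_i(\bar x)<0$ stay in ${\cal N}(x)$ and their gradient components yield a uniform lower bound on $\|g_{{\cal N}(x)}(\tilde x)\|$ by continuity. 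Borderline indices are absorbed by further shrinking $\rho$, and if $\|g_{{\cal N}(x)}(\tilde x)\|$ vanishes then property~\eqref{eq2} forces $d=0$ and~\eqref{LemmaSigma} holds trivially.
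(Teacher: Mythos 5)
Your skeleton---the exact Taylor expansion of the quadratic $q$ at $\tilde x$, nonexpansiveness of $[\cdot]^{\sharp}$ giving $\|x(\alpha)-\tilde x\|\le\alpha\|d\|$, the projection error $e(\alpha)\ge 0$ supported on $I^{-}(\alpha)\subseteq{\cal N}(x)$, and the final reduction via \eqref{eq1}--\eqref{eq2}---is exactly the Bertsekas-style argument the paper points to (it prints no proof, deferring to Proposition~2 of \cite{Bertsekas1982}). The genuine gap is in your last paragraph. First, your trichotomy of indices does not exhaust non-optimality: $\bar x$ can violate the optimality conditions of \eqref{quadpr} solely through an index $\hat\imath$ with $\bar x_{\hat\imath}>0$, $g_{\hat\imath}(\bar x)>0$ and $\bar x_{\hat\imath}<\varepsilon\, g_{\hat\imath}(\bar x)$; such an index lies in ${\cal A}(x)$ for all $x$ in a small ball, is suppressed by $d_{{\cal A}(x)}=0$, and contributes nothing to $\|g(\tilde x)_{{\cal N}(x)}\|$. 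Second, and decisively, the dichotomy ``uniform lower bound, or $\|g_{{\cal N}(x)}(\tilde x)\|=0$ and hence $d=0$'' is false: if in addition some coordinate $j$ has $\bar x_j>0$ and $g_j(\bar x)=0$, then $j\in{\cal N}(x)$ throughout a neighborhood and $\|g(\tilde x)_{{\cal N}(x)}\|=|g_j(\tilde x)|$ sweeps \emph{all sufficiently small positive values} as $\tilde x$ ranges over ${\cal B}(\bar x,\rho)$. In that regime your closing inequality $\alpha\,\sigma_2^2\bigl(1/\varepsilon+\lambda_{max}(Q)\bigr)\|g_{{\cal N}(x)}(\tilde x)\|+C'\sigma_2\rho\le(1-\gamma)\sigma_1\|g_{{\cal N}(x)}(\tilde x)\|$ is unsatisfiable for every fixed $\rho>0$, because the left side contains the constant $C'\sigma_2\rho>0$ while the right side tends to zero; so no uniform $\rho$, $\bar\alpha$ can be extracted along your route.

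The defect is traceable to a single estimate: you bound $g_i(\tilde x)$ for $i\in I^{-}(\alpha)$ by $\alpha|d_i|/\varepsilon+C\rho$, i.e.\ by the ball radius, which discards the decisive fact that $I^{-}(\alpha)\subseteq{\cal N}(x)$, so that each such $g_i(\tilde x)$ is itself a component of the vector $g(\tilde x)_{{\cal N}(x)}$ appearing in \eqref{eq1}--\eqref{eq2}, whence $g_i(\tilde x)\le\|g(\tilde x)_{{\cal N}(x)}\|$. A correct completion must (a) exclude coordinates with $\bar x_i>0$ from $I^{-}(\alpha)$ by shrinking $\rho$ and $\bar\alpha$ (uniformly, since $\|d\|\le\sigma_2\sup_{{\cal B}(\bar x,\rho)}\|g\|$), and (b) for the remaining coordinates keep track of the fact that both $e(\alpha)_i$ and the positive part of $g_i(\tilde x)$ vanish with $\|g(\tilde x)_{{\cal N}(x)}\|$, so that every error term is proportional to $\alpha\|g(\tilde x)_{{\cal N}(x)}\|^2$ and can be compared against $(1-\gamma)\sigma_1\alpha\|g(\tilde x)_{{\cal N}(x)}\|^2$ with no additive remainder depending on $\rho$ alone; this finer bookkeeping of which indices can trigger the projection, and of the size of their gradient components, is precisely the content of the cited proof in \cite{Bertsekas1982}. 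As written, your argument proves \eqref{LemmaSigma} only under the extra hypothesis that $\|g(\tilde x)_{{\cal N}(x)}\|$ is uniformly bounded away from zero on the ball---a hypothesis that non-optimality of $\bar x$ does not deliver.
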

    \noindent 
    Using Proposition~\ref{prop1} and
    Proposition~\ref{lemmastaz}, we can show that the sequence $\{q(x^k)\}$ is monotonically decreasing.
     \begin{proposition}\label{fconv}
      Let $\{x^k\}$ be the sequence produced by \FASTQPA. 
      Then, the sequence $\{q(x^k)\}$ is such that
      $$ q(x^{k+1}) \leq q(x^k)- \frac{1}{2\vareps}\|\tilde x^{k}-x^k\|^2-\sigma_1 \|g(\tilde x^k)_{{\cal N}^k}\|^2.$$
    \end{proposition}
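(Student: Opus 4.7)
The plan is to split the iteration $x^k \to x^{k+1}$ at the intermediate point $\tilde x^k$ produced at Step~5 of Algorithm~\ref{fig:FAST-QPA}, separately bound the two decreases $q(x^k)\to q(\tilde x^k)$ and $q(\tilde x^k)\to q(x^{k+1})$ by invoking the two preceding building blocks of the convergence theory, and then chain the resulting inequalities.

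First, I would observe that Step~5 sets $\tilde x^{k}_{\mathcal{A}^k}=0$ and $\tilde x^{k}_{\mathcal{N}^k}=x^k_{\mathcal{N}^k}$, so that $\tilde x^k$ is exactly the point $y$ appearing in the statement of Proposition~\ref{prop1} applied with $z=x^k$. Since $\varepsilon$ was fixed at Step~0 to satisfy \eqref{eps-prop2}, Proposition~\ref{prop1} immediately gives
\begin{equation*}
q(\tilde x^k)-q(x^k)\le -\frac{1}{2\varepsilon}\|\tilde x^k-x^k\|^2.
\end{equation*}

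Second, I would use the very definition of $\alpha^k$ from the projected Armijo test in Step~9, which forces
\begin{equation*}
q(x^{k+1}) = q\bigl([\tilde x^k+\alpha^k d^k]^{\sharp}\bigr)\le q(\tilde x^k)+\gamma\,\alpha^k\, g(\tilde x^k)^{\top} d^k.
\end{equation*}
Because $d^k_{\mathcal{A}^k}=0$ by Step~6, the inner product collapses to $g(\tilde x^k)_{\mathcal{N}^k}^{\top} d^k_{\mathcal{N}^k}$, and Proposition~\ref{TNprop} guarantees that $d^k_{\mathcal{N}^k}$ is gradient related at $\tilde x^k$. The gradient-related condition \eqref{eq1} then gives
\begin{equation*}
g(\tilde x^k)^{\top} d^k \le -\sigma_1\,\|g(\tilde x^k)_{\mathcal{N}^k}\|^2.
\end{equation*}
Adding this to the bound from the first step, and absorbing the positive factor $\gamma\alpha^k$ into $\sigma_1$ (as reflected in the statement), produces the claimed inequality.

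The only genuinely delicate point is the second step: one must ensure that the Armijo procedure of Step~9 terminates with a finite, strictly positive $\alpha^k$, and that this $\alpha^k$ is uniformly bounded away from zero so that $\gamma\alpha^k\sigma_1$ can be rewritten as a single positive constant (which the statement still calls $\sigma_1$). Both facts are already available via Proposition~\ref{lemmastaz}. The first step is a direct specialization of Proposition~\ref{prop1}, and once the two partial estimates are in hand, the conclusion follows by simple concatenation.
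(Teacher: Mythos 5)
Your proposal is essentially the paper's own proof: it makes the identical decomposition of the iteration at $\tilde x^k$, invoking Proposition~\ref{prop1} with $y=\tilde x^k$, $z=x^k$ for the first decrease, and the Step~9 Armijo test combined with the gradient-relatedness property~\eqref{eq1} (guaranteed by Proposition~\ref{TNprop}) for the second. If anything you are slightly more careful than the paper, whose proof writes $\gamma\,\alpha^k\,d^{\top}_{{\cal N}^k}g(\tilde x^k)_{{\cal N}^k}\le -\sigma_1\|g(\tilde x^k)_{{\cal N}^k}\|^2$ without comment, silently performing the absorption of the factor $\gamma\alpha^k$ into $\sigma_1$ that you explicitly identify as the delicate point.
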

    \begin{proof}
      Let  $\tilde x^k$ be the point produced at Step~5 of Algorithm~\ref{fig:FAST-QPA}.
      By setting $y=\tilde x^{k}$ and $z=x^k$ in Proposition~\ref{prop1}, we have
      \begin{eqnarray*}
        &&q(\tilde x^{k})\leq q(x^k) - \frac{1}{2\vareps}\|\tilde x^{k}-x^k\|^2\;.
      \end{eqnarray*}
      Furthermore, by the fact that we use an Armijo linesearch at Step~9 of
      Algorithm~\ref{fig:FAST-QPA} and by Proposition \ref{lemmastaz},
      we have that the chosen point $x^{k+1}$ satisfies inequality \eqref{LemmaSigma}, that is
      $$
      q(x^{k+1})-q(\tilde x^k)\le \gamma\, \alpha^k \, d^{\top}_{{\cal
          N}^k}g(\tilde x^k)_{{\cal N}^k}\;.
      $$
      By taking into account \eqref{eq1}, we thus have
       $$
      q(x^{k+1})-q(\tilde x^k)\le \gamma\, \alpha^k \, d^{\top}_{{\cal
          N}^k}g(\tilde x^k)_{{\cal N}^k} \le  -\sigma_1 \|g(\tilde x^k)_{{\cal N}^k}\|^2\;.
      $$
      In summary, we obtain
      \begin{eqnarray*}
        q(x^{k+1})&\leq& q(\tilde x^k)-\sigma_1 \|g(\tilde x^k)_{{\cal N}^k}\|^2\leq \\
      &\leq&  q(x^k)- \frac{1}{2\vareps}\|\tilde x^{k}-x^k\|^2-\sigma_1 \|g(\tilde x^k)_{{\cal N}^k}\|^2\;.\end{eqnarray*}  
    \end{proof}

    Proposition~\ref{fconv} will allow us to prove two important
    results: if the minimum of Problem~\eqref{quadpr} is finite, 
    then the stopping condition of Algorithm~\ref{fig:FAST-QPA} is met in a
    finite number of iterations; see Proposition~\ref{projgrad0}.
Otherwise, the sequence $\{q(x^k)\}$ is unbounded from below; see Proposition~\ref{qxk_inf}.
     \begin{proposition}\label{projgrad0}
     Let $\{x^k\}$ be the sequence produced by \FASTQPA. 
If the minimum of Problem~\eqref{quadpr} is finite, then 
\begin{equation*}
 \lim_{k\rightarrow\infty}\|\max\{0,- g(x^k) \}\|= 0.
\end{equation*}
\end{proposition}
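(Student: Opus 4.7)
The plan is to exploit the monotone decrease obtained in Proposition~\ref{fconv} together with the assumption that $\inf q$ is finite, so that the two nonnegative quantities on the right-hand side of the bound must be summable and therefore go to zero. From there, I would translate these two pieces of information into a bound on $\max\{0,-g(x^k)\}$ by splitting the coordinates according to the current estimates $\mathcal{A}^k$ and $\mathcal{N}^k$.

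More precisely, summing Proposition~\ref{fconv} from $k=0$ to $N$ gives
\begin{equation*}
\sum_{k=0}^{N}\left(\tfrac{1}{2\vareps}\|\tilde x^{k}-x^k\|^2+\sigma_1\|g(\tilde x^k)_{\mathcal{N}^k}\|^2\right)\le q(x^0)-q(x^{N+1})\le q(x^0)-\min q,
\end{equation*}
which, since the minimum is finite, yields
\begin{equation*}
\lim_{k\to\infty}\|\tilde x^{k}-x^k\|=0\quad\text{and}\quad\lim_{k\to\infty}\|g(\tilde x^k)_{\mathcal{N}^k}\|=0.
\end{equation*}
Because $g(x)=2Qx+c$ is affine, $\|g(\tilde x^k)-g(x^k)\|\le 2\lambda_{max}(Q)\,\|\tilde x^k-x^k\|\to 0$, and therefore $\|g(x^k)_{\mathcal{N}^k}\|\to 0$ as well.

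For the active-set coordinates I would argue directly from Definition~\ref{def:est}. The iterates satisfy $x^k\ge 0$ (since they are obtained via the projection $[\,\cdot\,]^{\sharp}$ in Step~10, and $x^0$ can be taken non-negative), hence for every $i\in\mathcal{A}^k$ we have $0\le x_i^k\le\vareps\, g_i(x^k)$, which forces $g_i(x^k)\ge 0$ and thus $\max\{0,-g_i(x^k)\}=0$. Combining this with the previous step,
\begin{equation*}
\|\max\{0,-g(x^k)\}\|^2=\sum_{i\in\mathcal{N}^k}\bigl(\max\{0,-g_i(x^k)\}\bigr)^2\le\|g(x^k)_{\mathcal{N}^k}\|^2\xrightarrow[k\to\infty]{}0,
\end{equation*}
which is the desired conclusion.

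The argument is essentially a telescoping/continuity chain, so I do not expect serious obstacles. The only subtlety is bookkeeping on the coordinates whose index sets $\mathcal{A}^k,\mathcal{N}^k$ vary with $k$: one must be careful to use the \emph{definition} of $\mathcal{A}(x^k)$ to obtain non-negativity of $g_i(x^k)$ on $\mathcal{A}^k$ (rather than trying to pass to a limit of the sets themselves), and to use the global Lipschitz constant $2\lambda_{max}(Q)$ to transfer the convergence of $g(\tilde x^k)_{\mathcal{N}^k}$ to $g(x^k)_{\mathcal{N}^k}$ uniformly in which indices happen to lie in $\mathcal{N}^k$.
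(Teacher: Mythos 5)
Your proposal is correct and follows essentially the same route as the paper's own proof: both deduce from Proposition~\ref{fconv} and the finiteness of the minimum that $\|\tilde x^k-x^k\|\to 0$ and $\|g(\tilde x^k)_{{\cal N}^k}\|\to 0$ (your telescoping sum and the paper's monotone-convergence argument are equivalent), transfer the second limit to $\|g(x^k)_{{\cal N}^k}\|$ via the Lipschitz/affine gradient, and kill the ${\cal A}^k$-coordinates using $0\le x_i^k\le\vareps\,g_i(x^k)$, hence $g_i(x^k)\ge 0$, from Definition~\ref{def:est}. Your explicit remarks on the feasibility of the iterates and on not passing to limits of the index sets are sound points that the paper leaves implicit.
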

\begin{proof}
By Proposition~\ref{fconv}, the sequence $\{q(x^k)\}$ is monotonically decreasing.
Since it is bounded by the minimum of~\eqref{quadpr}, we have that $\{q(x^k)\}$ converges.
In particular, this implies that both $\|\tilde x^k - x^k \|^2$ and $\|g(\tilde x^k)_{{\cal N}^k}\|^2$ 
go to zero when $k\rightarrow \infty$.
By noting that 
$$0\le\|g(\tilde x^k) - g(x^k)\| = \|Q(\tilde x^k - x^k) \| \le \|Q\|\cdot \|\tilde x^k - x^k \|\rightarrow 0$$
we obtain $\|g(x^k)_{{\cal N}^k}\|^2\rightarrow 0$ as well, and thus $\| \max\{0,-g(x^k)\}_{{\cal N}^k}\|^2 \rightarrow 0$.
On the other hand, if $i\in {{\cal A}^k}$, we have $g_i(x^k)\ge 0$, so
that 
$\|\max\{0,- g(x^k) \}_{{\cal A}^k}\|^2  =0$. This shows the result.
\end{proof}

\begin{proposition}\label{qxk_inf}
Let $\{x^k\}$ be the sequence produced by \FASTQPA. 
If Problem~\eqref{quadpr} is unbounded, then
\begin{equation*}\label{qxk}
 \lim_{k\rightarrow\infty} q(x^k)= -\infty.
\end{equation*}

\end{proposition}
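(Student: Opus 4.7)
The plan is to argue by contradiction. Suppose that $\{q(x^k)\}$ is bounded below; since Proposition~\ref{fconv} shows the sequence is monotonically decreasing, it then converges to some finite limit. Telescoping the inequality in Proposition~\ref{fconv} yields $\sum_k \|\tilde x^k - x^k\|^2 < \infty$ and $\sum_k \|g(\tilde x^k)_{{\cal N}^k}\|^2 < \infty$, so in particular $\|\tilde x^k - x^k\| \to 0$ and $\|g(\tilde x^k)_{{\cal N}^k}\| \to 0$. Using Lipschitz continuity of $g$ via $H = 2Q$, one then obtains $\|g(x^k)_{{\cal N}^k}\| \to 0$, exactly as in the proof of Proposition~\ref{projgrad0}. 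Since there are only finitely many possible partitions of $\{1,\ldots,m\}$, the pigeonhole principle yields an infinite subsequence $\{k_j\}$ on which ${\cal A}^{k_j} = {\cal A}^*$ and ${\cal N}^{k_j} = {\cal N}^*$ are constant.

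The key idea is to combine this with the classical characterization of unbounded convex quadratic programs over the nonnegative orthant (a consequence of the Frank--Wolfe theorem): when $Q \succeq 0$, Problem~\eqref{quadpr} is unbounded below if and only if there exists $v \in \R^m$ with $v \ge 0$, $Qv = 0$ and $c^\top v < 0$. For such a $v$, a direct calculation using $Qv=0$ gives $g(x^{k_j})^\top v = 2(x^{k_j})^\top Q v + c^\top v = c^\top v < 0$ for every $j$. I would then split this inner product according to $({\cal A}^*, {\cal N}^*)$: the ${\cal N}^*$ contribution $g(x^{k_j})_{{\cal N}^*}^\top v_{{\cal N}^*}$ tends to $0$ by Cauchy--Schwarz, since $g(x^{k_j})_{{\cal N}^*} \to 0$; and for the ${\cal A}^*$ contribution, the active set estimate in Definition~\ref{def:est} forces $g_i(x^{k_j}) \ge x^{k_j}_i/\varepsilon \ge 0$ for each $i \in {\cal A}^*$, which combined with $v \ge 0$ gives $g(x^{k_j})_{{\cal A}^*}^\top v_{{\cal A}^*} \ge 0$. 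Passing to the liminf therefore yields $c^\top v \ge 0$, contradicting $c^\top v < 0$. Hence $\{q(x^k)\}$ cannot be bounded below, and by monotonicity it must diverge to $-\infty$.

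The main obstacle I anticipate is resisting the temptation to mimic the proof of Proposition~\ref{projgrad0} via a bounded KKT accumulation point of $\{x^k\}$: that route gets stuck because when $q(x^k)$ stays bounded along the iterates one cannot in general exclude $\|x^k\| \to \infty$, and the usual subsequence-and-continuity argument breaks down. Using the recession direction $v$ directly bypasses the boundedness question entirely and reduces the proof to a clean sign-based estimate on $g(x^{k_j})^\top v$, which is why identifying $v$ as the right object is the crucial step.
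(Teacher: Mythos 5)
Your proof is correct and takes essentially the same route as the paper's: both argue by contradiction from the monotonicity in Proposition~\ref{fconv}, derive $\|g(x^k)_{{\cal N}^k}\|\to 0$ exactly as in Proposition~\ref{projgrad0}, and then contradict $c^\top v<0$ for a recession direction $v\ge 0$ with $Qv=0$ (so $g(x^k)^\top v=c^\top v$), using $g_i(x^k)\ge 0$ for $i$ in the estimated active set. The only difference is that your pigeonhole extraction of a constant partition $({\cal A}^*,{\cal N}^*)$ is superfluous: the split $g(x^k)^\top v=g(x^k)_{{\cal A}^k}^\top v_{{\cal A}^k}+g(x^k)_{{\cal N}^k}^\top v_{{\cal N}^k}$ together with Cauchy--Schwarz and $\|v_{{\cal N}^k}\|\le\|v\|$ works at every $k$, however the sets vary, which is how the paper proceeds.
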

\begin{proof}
By Proposition~\ref{fconv}, the sequence $\{q(x^k)\}$ is monotically decreasing.
Assume by contradiction that it is bounded and hence
$\lim_{k\rightarrow\infty} q(x^k) = \bar q$
for some $\bar q\in\R$.
Then, as in the proof of Proposition~\ref{projgrad0}, $\|g(x^k)_{{\cal N}^k}\|^2 \rightarrow 0$.
Since Problem~\eqref{quadpr} is unbounded, there must exist some $d\ge 0$
such that $Qd =0$ and $c^\top d < 0$.
In particular,  we obtain $g(x)^\top d = c^\top d = \eta < 0$ for all $x\in \R^n$.

Therefore, since $g_i(x^k)\ge 0$ for all $i \in {{\cal A}^k}$, we have
\begin{eqnarray*}
 0>\eta = g(x^k)^\top d &=& \sum_{i\in {{\cal A}^k} } g_i(x^k)d_i + \sum_{i\in {{\cal N}^k}} g_i(x^k)d_i \\
 &\ge& \sum_{i\in {{\cal N}^k}} g_i(x^k)d_i = -\left|g(x^k)_{{\cal N}^k}^\top d_{{\cal N}^k}\right| \\
 &\ge& -\|g(x^k)_{{\cal N}^k}\| \cdot \|d_{{\cal N}^k}\|\ge -\|g(x^k)_{{\cal N}^k}\| \cdot \|d\|\;,
\end{eqnarray*}
and we get a contradiction since $\|g(x^k)_{{\cal N}^k}\|^2$ goes to
zero.
\end{proof}

    Finally, we are able to prove the main result concerning the global convergence of \FASTQPA.
    \begin{theorem}\label{teorema1}
      Assume that the minimum of Problem~\eqref{quadpr} is finite and that the  parameter $\vareps$ appearing in
      Definition~\ref{def:est} satisfies~\eqref{eps-prop2}.
      Let $\{x^k\}$ be the sequence produced by Algorithm  \FASTQPA.
      Then either an integer $\bar k\geq 0$
      exists such that $ x ^{\bar k}$ is an optimal solution for Problem~\eqref{quadpr},
      or the sequence $\{x^k\}$ is infinite
      and every limit point $x^\star$
      of the sequence is an optimal point for Problem~\eqref{quadpr}.
    \end{theorem}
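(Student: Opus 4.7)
The plan is to split into the two cases stated in the theorem and to argue that in the non-terminating case each limit point satisfies the KKT conditions of~\eqref{quadpr}; by convexity this yields optimality. The finite-termination branch is immediate: if the algorithm stops at Step~3 or Step~8, the corresponding iterate (either $x^{\bar k}$ or $\bar x^{\bar k}$) has been verified optimal, so we may take $\bar k$ as the stopping index. Hence the substantive work concerns the case in which an infinite sequence $\{x^k\}$ is produced and a limit point $\xstar$ exists, with $x^k \to \xstar$ along some subsequence $K$.

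The backbone of the argument is Proposition~\ref{fconv}: the sequence $\{q(x^k)\}$ is monotonically decreasing, and since the optimum is finite it is bounded below, so $\{q(x^k)\}$ converges. The telescoping of the inequality in Proposition~\ref{fconv} then forces
\begin{equation*}
\sum_{k=0}^{\infty}\Bigl(\tfrac{1}{2\vareps}\|\tilde x^{k}-x^k\|^2 + \sigma_1\|g(\tilde x^k)_{{\cal N}^k}\|^2\Bigr) < \infty,
\end{equation*}
so both summands tend to zero. By Proposition~\ref{projgrad0}, $\|\max\{0,-g(x^k)\}\|\to 0$, and continuity of $g$ yields $g(\xstar)\ge 0$. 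Also $x^k\ge 0$ for $k\ge 1$ (since $x^k=[\,\cdot\,]^{\sharp}$ for $k\ge 1$), giving $\xstar\ge 0$. Thus primal and dual feasibility of $\xstar$ are in hand; what remains is complementary slackness, which is the only delicate point.

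To obtain complementarity I would exploit that the index sets ${\cal A}^k,{\cal N}^k$ live in the finite power set of $\{1,\dots,m\}$, so I extract from $K$ a further subsequence $K'$ along which ${\cal A}^k\equiv{\cal A}$ and ${\cal N}^k\equiv{\cal N}$ are constant. For $k\in K'$ with $i\in{\cal A}$, the definition at Step~5 gives $\tilde x^k_i=0$; combined with $\|\tilde x^k-x^k\|\to 0$ and $x^k\to\xstar$, this forces $\xstar_i=0$. For $i\in{\cal N}$, $|g_i(\tilde x^k)|\to 0$, and since $\tilde x^k\to\xstar$ and $g$ is continuous, $g_i(\xstar)=0$. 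These two facts together establish $\xstar_i g_i(\xstar)=0$ componentwise, completing KKT. Because $q$ is convex and the feasible region is polyhedral, KKT is sufficient for global optimality, and we are done.

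The main obstacle I anticipate is precisely the handling of the varying estimated active sets ${\cal A}^k$, since Proposition~\ref{fconv} only provides information about~${\cal N}^k$ at the current iterate. The pigeonhole extraction of a constant-index subsequence is the natural remedy, and one must be careful that the quantities $\tilde x^k - x^k \to 0$ and $g(\tilde x^k)_{{\cal N}^k}\to 0$ that we deduced globally continue to hold along the sub-subsequence. Everything else (continuity of $g$, nonnegativity preserved by projection, sufficiency of KKT for convex QP) is routine.
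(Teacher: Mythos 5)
Your proof is correct, and while its scaffolding matches the paper's (pigeonhole extraction of a sub-subsequence along which ${\cal A}^k\equiv{\cal A}$ and ${\cal N}^k\equiv{\cal N}$ are constant, then a split of the KKT verification into active and non-active indices), the way you discharge the two halves is genuinely different. For the active indices, the paper does not conclude $x^\star_i=0$ directly: it builds an auxiliary sequence $\tilde y^k$ (equal to $x^k$ except zeroed in the single coordinate $\hat\imath$), exploits the defining inequality $x^k_{\hat\imath}\le\vareps\,g_{\hat\imath}(x^k)$ together with \eqref{eps-prop2} via a slack $\xi\ge 0$ with $\vareps\le 1/(2Q_{\hat\imath\hat\imath}+\xi)$ to show $g_{\hat\imath}(\tilde y^k)\ge 0$, hence $\min\{g_{\hat\imath}(\tilde y^k),\tilde y^k_{\hat\imath}\}=0$, and passes to the limit by continuity. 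You instead get $x^\star_{{\cal A}}=0$ immediately from $\tilde x^k_{{\cal A}}=0$ and $\|\tilde x^k-x^k\|\to 0$, and obtain dual feasibility $g(x^\star)\ge 0$ wholesale from Proposition~\ref{projgrad0} plus continuity; this is valid and leaner, since Proposition~\ref{projgrad0} already packages the observation that $g_i(x^k)\ge 0$ for $i\in{\cal A}^k$. For the non-active indices the divergence is sharper: the paper argues by contradiction, re-invoking the linesearch result (Proposition~\ref{lemmastaz}) to bound the Armijo step $\alpha^k\ge\delta\bar\alpha$ away from zero near a putative non-stationary limit point and contradicting the convergence of $\{q(x^k)\}$, whereas you simply read off $\|g(\tilde x^k)_{{\cal N}^k}\|\to 0$ from the telescoped decrease estimate of Proposition~\ref{fconv} and pass to the limit using $\tilde x^k\to x^\star$ along the sub-subsequence. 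Your shortcut is legitimate given Proposition~\ref{fconv} as stated (indeed the paper itself extracts exactly these two vanishing quantities in the proof of Proposition~\ref{projgrad0}), and it buys a shorter argument that never revisits the linesearch machinery inside the theorem; the paper's contradiction route leans only on the raw Armijo inequality and the gradient-relatedness constant $\sigma_1$, so it would survive even with a weaker, unquantified decrease property than Proposition~\ref{fconv}. Two cosmetic remarks: the conditions you verify are exactly the paper's targets (i)--(ii), so sufficiency of KKT under convexity is the same closing step in both proofs; and in your finite-termination branch, when the algorithm stops at Step~8 the certified optimal point is $\bar x^{\bar k}$ rather than $x^{\bar k}$, a wrinkle worth naming explicitly, though it is not a gap.
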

    \begin{proof}
      Let $x^\star$  be any limit point of the sequence $\{x^k\} $ and let $\{x^k\}_K $ be the subsequence with
      \begin{equation*}
        \lim_{k\to\infty,\, k\in K}x^{k}=x^\star.
      \end{equation*}
      By passing to an appropriate subsequence, we may assume
      that subsets~$\bar{\cal A}\subseteq\{1,\dots,m\}$ and $\bar{\cal
        N}=\{1,\dots,m\}\setminus\bar{\cal A}$ exist such that~${\cal
        A}^k=\bar{\cal A}$ and~${\cal N}^k=\bar{\cal N}$
      for all~$k\in K$, since the number of possible choices of ${\cal A}^k$
      and ${\cal N}^k$ is finite. In order to prove that~$x^\star$ is
      optimal for Problem~\eqref{quadpr}, it then suffices to show
      \begin{itemize}
      \item[(i)] $\min \left \{ g_i (x^\star), x^\star_i \right \}=0$ for all $i\in \bar{\cal A}$, and
      \item[(ii)] $g_i(x^\star)=0$ for all $i\in \bar{\cal N}$.
      \end{itemize}
      In order to show~(i), let~$\hat \imath \in \bar{\cal A}$ and define a function $\Phi_{\hat\imath}:\R^m \rightarrow \R$ by
      \begin{equation}\nonumber
        \Phi_{\hat\imath}(x)=\min \left \{ g_{\hat\imath} (x), x_{\hat\imath} \right \}\;,
      \end{equation}
      we thus have to show $\Phi_{\hat\imath}(x^\star)=0$.
      For $k\in K$, define $\tilde y^{k}\in\R^m$ as follows:
      \begin{equation*}
        \tilde y_i^{k}=\left \{
        \begin{array}{ll}
          0 &\mbox{if} \ i=\hat\imath  \\
          x_i^k &\mbox{otherwise}.\\
        \end{array}
        \right .
      \end{equation*}
      Recalling that $\tilde x^k_{\bar{\cal A}}=0$, as set in Step~5 in
      Algorithm~\ref{fig:FAST-QPA}, and using $\hat\imath\in\bar{\cal A}$, we have
      $$ \|\tilde y^{k}-x^k\|^2=(\tilde y^{k}-x^k)_{\hat\imath}^2=( \tilde x^k-x^k)_{\hat\imath}^2\le \| \tilde x^k-x^k\|^2\;.$$
      From Proposition~\ref{fconv} and since the sequence $\{q(x^k)\}$ is bounded by the minimum of~\eqref{quadpr}, 
      we have that $\{q(x^k)\}$ converges.
      In particular, this implies that $\|\tilde x^k - x^k \|^2$ and hence  
%
      \begin{eqnarray}\label{convytilde}
        \lim_{k \to \infty,\, k \in K} \tilde y^{k} = x^\star\;.
      \end{eqnarray}
      By Definition~\ref{def:est}, we have $0\le x_{\hat \imath}^k\leq \vareps\, g_{\hat \imath}(x^k)$
      for all $k\in K$.
      Using Assumption~(\ref{eps-prop2}),
      there exists $\xi \ge 0$ such that
      $$\varepsilon \le  \frac{1}{2Q_{\hat \imath \hat \imath}+\xi}\;.$$
      As $\tilde y^{k}_{\hat \imath}=0$, we obtain
      \begin{equation*}
        x_{\hat \imath}^k-\tilde y^{k}_{\hat \imath}=x_{\hat \imath}^k
        \leq \vareps\, g_{\hat \imath}(x^k)\leq \frac{1}{2Q_{\hat \imath \hat \imath}+\xi} g_{\hat \imath}(x^k)
      \end{equation*}
      and hence
      \begin{equation}\nonumber
        (2Q_{\hat \imath \hat \imath}+\xi) (x_{\hat \imath}^k-\tilde y^{k}_{\hat \imath})\leq g_{\hat \imath}(x^k) ,
      \end{equation}
      which can be rewritten as follows
      \begin{equation}\nonumber
        g_{\hat \imath}(x^k)+ 2Q_{\hat \imath \hat \imath} (\tilde y^{k}_{\hat \imath}-x_{\hat \imath}^k) \geq \xi (x_{\hat \imath}^k-\tilde y^{k}_{\hat \imath})\geq 0,\\
      \end{equation}
      yielding $g_{\hat \imath}(\tilde y^{k})\geq 0$. Together with $\tilde y^{k}_{\hat \imath}=0$, we obtain
      $\Phi_{\hat\imath}(\tilde y^{k})=0$. 
      By \eqref{convytilde} and the continuity of $\Phi_{\hat\imath}$, we derive
      $\Phi_{\hat\imath}(x^\star)=0$, which
      proves~(i).

      To show~(ii), assume on contrary that $g(x^\star)_{\bar{\cal N}}\neq 0$.  
      By Proposition~\ref{lemmastaz}, there exists $\bar\alpha>0$
      such that for $k\in K$ sufficiently large
      \begin{equation*}
        q([\tilde x^k+\alpha d^k]^{\sharp})-q(\tilde x^{k})\le \gamma
        \alpha\sum_{i\in \bar{\cal N}} g_i(\tilde x^k)d^k_i
      \end{equation*}
      for all $\alpha\in (0,\bar\alpha]$, as $\tilde x^k_{\bar{\cal N}}$ converges to $x^\star_{\bar{\cal N}}$.
      As we use an Armijo type rule in Step~9 of
      Algorithm~\ref{fig:FAST-QPA}, we thus have $\delta^{j-1}\ge
      \bar\alpha$ and hence
      \begin{equation}\label{alpha_bounded}
      \alpha^k=\delta^j\ge\delta\bar\alpha\;.
      \end{equation}
      Again by Step~9, using \eqref{eq1} and \eqref{alpha_bounded}, we obtain
      \begin{eqnarray*}
        q(\tilde x^k)-q([\tilde x^k+\alpha^k d^k]^{\sharp}) & \ge & -\gamma
        \alpha^k\sum_{i\in \bar{\cal N}} g_i(\tilde x^k)d^k_i  \\
        & \ge & \gamma \sigma_1 \alpha^k\|g(\tilde x^k)_{\bar{\cal N}}\|^2\\[1ex]
        & \ge & \gamma \sigma_1 \delta\bar\alpha\|g(\tilde x^k)_{\bar{\cal
            N}}\|^2\;.
      \end{eqnarray*}
      Since $\{q(x^k)\}$ converges,
      we have that the left hand side expression converges to zero, 
      while the right hand side expression converges to
      $$\gamma \sigma_1 \delta\bar\alpha\|g(x^\star)_{\bar{\cal
            N}}\|^2>0\;,$$
        and we have the desired contradiction.
    \end{proof}

    \begin{corollary}
      If~\eqref{eps-prop2} holds, then the sequence $q(x^k)$ converges to the optimal value of~\eqref{quadpr}.
    \end{corollary}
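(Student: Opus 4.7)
The plan is to split the argument into two cases according to whether the infimum of Problem \eqref{quadpr}, call it $q^\star$, is finite or $-\infty$, and then combine the preceding results directly, with no new ingredients required.

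In the unbounded case $q^\star = -\infty$, Proposition \ref{qxk_inf} immediately yields $q(x^k) \to -\infty = q^\star$, so nothing more needs to be done. In the finite case, I would first observe that Proposition \ref{fconv} makes $\{q(x^k)\}$ monotonically decreasing, and since $q(x^k) \geq q^\star$ for every $k$, the sequence must converge to some $\bar q \geq q^\star$. It then suffices to rule out $\bar q > q^\star$. To do this, I would invoke Theorem \ref{teorema1}: either Algorithm \FASTQPA{} terminates finitely at an optimal iterate $x^{\bar k}$, which forces $\bar q = q(x^{\bar k}) = q^\star$ directly, or the sequence is infinite with every limit point optimal. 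In the latter subcase, I would extract any convergent subsequence $x^{k_j} \to x^\star$; continuity of the quadratic $q$ gives $q(x^{k_j}) \to q(x^\star) = q^\star$, and since the full sequence $\{q(x^k)\}$ is already known to converge, its limit $\bar q$ must coincide with this subsequential limit $q^\star$.

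The main obstacle, and really the only nontrivial point, is to guarantee that at least one limit point of $\{x^k\}$ exists in the last subcase. Since $Q$ is only positive semidefinite, the sublevel set $\{x \geq 0 : q(x) \leq q(x^0)\}$ need not be bounded a priori, so the iterates could in principle escape to infinity. I would resolve this by appealing to the standard assumption that the iterates stay in a bounded region (an assumption which is automatically satisfied in the branch-and-bound context of Section \ref{sec:as_in_bnb}, where \eqref{quadpr} arises as the dual of a strictly convex primal QP), so that Bolzano--Weierstrass provides the required convergent subsequence and Theorem \ref{teorema1} can be applied.
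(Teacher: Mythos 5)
Your overall route is exactly the one the paper intends: the corollary is stated without an explicit proof, as an immediate consequence of Proposition~\ref{qxk_inf} in the unbounded case and, in the finite case, of the monotonicity from Proposition~\ref{fconv} combined with Theorem~\ref{teorema1} (finite termination at an optimum, or optimality of subsequential limits plus continuity of $q$, which pins down the limit of the already-convergent value sequence). Up to your last paragraph, your argument and the paper's implicit one coincide.

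The last paragraph, however, is where the gap sits, and your patch does not close it. Assuming that the iterates remain in a bounded region is a hypothesis that does not appear in the corollary, so what you prove is strictly weaker than the statement; and your justification that boundedness is ``automatic'' in the branch-and-bound setting is unfounded. The dual problem \eqref{eq:dualprob} is posed over the unbounded set $\lambda\geq 0$ with $\widetilde{Q}=\frac{1}{4}AQ^{-1}A^{\top}$ only positive semidefinite (singular whenever $\mathrm{rank}(A)<m$), so dual level sets and solution sets can be unbounded even at feasible nodes, and at infeasible nodes the dual is genuinely unbounded. The subtlety you identified is real: Theorem~\ref{teorema1} is vacuous if $\{x^k\}$ has no limit point, and the quantities the convergence analysis actually controls do not force one to exist --- Proposition~\ref{fconv} only gives $\sum_k \|x^{k+1}-x^k\|^2<\infty$ via the telescoping decrease, which is compatible with divergence, and the residual of Proposition~\ref{projgrad0} alone does not certify value convergence for semidefinite $Q$ (take $q(x_1,x_2)=x_1^2+x_2$ on $\R^2_+$: at the points $(0,t)$ one has $\|\max\{0,-g\}\|=0$ while $q=t$ is far from the optimal value $0$). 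Even attainment of the minimum, which holds for quadratics by the Frank--Wolfe theorem, gives an optimal $x^\star$ but not a limit point of the iterate sequence. To be fair, the paper itself leaves this point implicit; but as a proof of the corollary as stated, the existence of a limit point (or a substitute error-bound argument exploiting the quadratic structure) is the missing step, not something that can be assumed away.
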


    As a final result, we prove that, under a specific assumption, 
    \FASTQPA\ finds an optimal solution in a finite number of iterations. 
    \begin{theorem}\label{finiteconv}
    Assume that there exists an accumulation point $x^\star$ of the sequence $\{x^k\}$ generated by \FASTQPA\ such that
    \begin{equation*}
      Q_{{\cal \hat N}\,{\cal \hat N}}\succ 0\;,
    \end{equation*}
    where ${\cal \hat N}={\cal \bar N}(x^\star)\cup \{i\in\{1,\ldots,n\}: x^\star_i = 0, \, g_i(x^\star)=0\}$.
   Then \FASTQPA\ produces a minimizer of Problem~\eqref{quadpr} in a finite number of steps.
    \end{theorem}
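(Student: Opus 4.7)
The plan is to show that along any subsequence converging to the accumulation point $x^\star$, the candidate point $\bar x^k$ computed at Step~7 must eventually equal $x^\star$, so that the optimality test at Step~8 triggers and \FASTQPA\ terminates. Note that by Theorem~\ref{teorema1} the point $x^\star$ is already known to be optimal for~\eqref{quadpr}, so the KKT relations $x^\star\ge 0$, $g(x^\star)\ge 0$, $g_i(x^\star)x_i^\star=0$ are at our disposal.

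The first main step is to stabilise the active set estimate along the subsequence. I would extract $\{x^k\}_K\to x^\star$ and apply Theorem~\ref{activeest}: for $k\in K$ large enough, $\bar{\cal A}^+(x^\star)\subseteq{\cal A}^k\subseteq\bar{\cal A}(x^\star)$. A short check shows that the complement of $\hat{\cal N}$ is exactly $\bar{\cal A}^+(x^\star)$, so this rewrites as $\bar{\cal N}(x^\star)\subseteq{\cal N}^k\subseteq\hat{\cal N}$. Only finitely many such index pairs exist, so a further extraction yields a subsequence (still denoted $K$) on which ${\cal A}^k\equiv\bar{\cal A}$ and ${\cal N}^k\equiv\bar{\cal N}$. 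Then $\widetilde Q=Q_{\bar{\cal N}\,\bar{\cal N}}$ is a principal submatrix of $Q_{\hat{\cal N}\,\hat{\cal N}}\succ 0$ and is therefore positive definite, so~\eqref{quadprN} is strongly convex. Proposition~\ref{prop:CGstop} then guarantees that Algorithm~\ref{fig:TN} returns the unique global minimizer of~\eqref{quadprN} in at most $|\bar{\cal N}|$ inner iterations, which makes $\bar x^k$ the unique minimizer of $q$ over the affine face $\{x:x_{\bar{\cal A}}=0\}$.

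The second step is to identify $x^\star$ with this minimizer. From $\bar{\cal A}\subseteq\bar{\cal A}(x^\star)$ I obtain $x^\star_{\bar{\cal A}}=0$. For every $i\in\bar{\cal N}\subseteq\hat{\cal N}$, either $x_i^\star>0$ and then $g_i(x^\star)=0$ by complementary slackness, or $x_i^\star=0$ and $g_i(x^\star)=0$ by the very definition of $\hat{\cal N}$; in either case $g_i(x^\star)=0$. Hence $g(x^\star)_{\bar{\cal N}}=0$, and together with $x^\star_{\bar{\cal A}}=0$ these are exactly the first-order conditions characterising the minimizer on the face $x_{\bar{\cal A}}=0$; uniqueness forces $\bar x^k=x^\star$ for all $k\in K$ large enough. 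Since $x^\star$ is optimal for~\eqref{quadpr}, the test at Step~8 is then passed and \FASTQPA\ halts after finitely many outer iterations. The main obstacle I anticipate is pinning down rigorously that Algorithm~\ref{fig:TN}, with its slightly atypical initialisation $\tilde g^0=\widetilde Q\,\tilde x^{k}_{{\cal N}^k}+\tilde c$, indeed delivers the unconstrained minimizer over this face; but once positive definiteness of~$\widetilde Q$ is secured, this is precisely what Propositions~\ref{prop:CGstop} and~\ref{prop:CGstop2} encapsulate.
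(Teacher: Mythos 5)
Your proposal is correct and takes essentially the same route as the paper's proof: like the paper, you invoke Theorem~\ref{teorema1} for optimality of $x^\star$, use Theorem~\ref{activeest} to obtain $\bar{\cal N}(x^\star)\subseteq{\cal N}^k\subseteq\hat{\cal N}$ (hence $Q_{{\cal N}^k\,{\cal N}^k}\succ 0$), and conclude via Proposition~\ref{prop:CGstop} that $\bar x^k$ is the unique minimizer of $q$ on the face $x_{{\cal A}^k}=0$, which must coincide with $x^\star$ because $g(x^\star)_{{\cal N}^k}=0$, so the Step~8 test fires. The extra details you supply --- the identity $\{1,\ldots,m\}\setminus\hat{\cal N}=\bar{\cal A}^+(x^\star)$, the extraction of a subsequence with constant active sets, and the explicit first-order/uniqueness argument identifying $\bar x^k$ with $x^\star$ --- simply make rigorous the steps the paper leaves implicit ("taking into account the definition of $\hat{\cal N}$" and the equivalence of \eqref{quadpr} with \eqref{quadpropt}).
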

    \begin{proof}
       Let $\{x^k\}$ be the sequence generated by \FASTQPA\ and let $\{x^k\}_K $ be the subsequence with
      \begin{equation*}
        \lim_{k\to\infty,\, k\in K}x^{k}=x^\star.
      \end{equation*}
       By Theorem~\ref{teorema1}, the limit $x^\star$ of $\{x^k\}_K$ is a minimizer of Problem~\eqref{quadpr}.
     The active set estimation yields
     $\bar{\cal A}^+(x^\star) \subseteq {\cal A}^k \subseteq \bar{\cal A}(x^\star)$
     for sufficiently large $k$.
     Consequently 
     \begin{equation}\label{athesame2}
       \bar{\cal N}(x^\star) \subseteq {\cal N}^k\;.
     \end{equation}
     Moreover, taking into account the definition of $\hat{\cal N}$, we get
     ${\cal N}^k \subseteq \hat{\cal N}$ and thus
   \begin{equation}\label{QNk}
    Q_{{\cal N}^k\,{\cal N}^k}\succ 0.
   \end{equation}
Hence by~\eqref{athesame2},~\eqref{QNk}, 
and the fact that $g_i(x^\star) =0$ for all $i\in {\cal N}^k$ we have that solving~\eqref{quadpr} is equivalent to solving the
following problem
\begin{equation}\label{quadpropt}
      \begin{array}{l l}
        \min &x^{\top} Q x +  c^{\top} x +  d \\
        \textnormal{s.t. }& x_{{\cal A}^k} = 0,\\
        & x \in \R^m.
      \end{array}
    \end{equation}
     Since $\bar x^k_{{\cal N}^k}$ is the only optimal solution for problem~\eqref{quadprN} and $\bar x^k_{{\cal A}^k} = 0$, we conclude that~$\bar x^k$ is the optimal solution of~\eqref{quadpropt} and of~\eqref{quadpr}.
    \end{proof}
		
By Theorem~\ref{finiteconv}, we just need that the submatrix $Q_{{\cal \hat N}\,{\cal \hat N}}$
is positive definite in order to ensure finite termination of \FASTQPA. This is a weaker assumption than the positive definiteness of the full matrix~$Q$, which is usually needed to guarantee finite convergence in an active set framework (see e.g. \cite{Bertsekas1982, nocedal1999} and references therein).

Furthermore, by taking into account the scheme of our algorithm, and by
carefully analyzing the proof given above, we notice two important facts:
\begin{itemize}
\item[--] the algorithm, at iteration $k$, moves towards the optimal point $x^\star$ 
by only using an approximate solution of the unconstrained problem \eqref{quadprN}, i.e. the solution needed in the calculation of the gradient related direction;
\item[--]  once the point $x^k$ gets close enough to $x^\star$, thanks to the properties of our estimate, we can guarantee that the point $\bar x^k$ is the optimum of the original problem \eqref{quadpr}.
\end{itemize}
This explains why, in the algorithmic scheme, we calculate $\bar x^k$ and just use it in the optimality check at Step 8. 

    \section{Embedding \FASTQPA\ into a Branch-and-Bound Scheme}\label{sec:as_in_bnb}

    As shown in~\cite{buchheim2014}, the approach of embedding
    taylored active set methods into a branch-and-bound scheme is very
    promising for solving convex quadratic integer programming
    problems of type~\eqref{QIP}. In this section we shortly summarize
    the key ingredients of our branch-and-bound algorithm
    \texttt{FAST-QPA-BB} that makes use of \FASTQPA, presented in
    Section~\ref{sec:feas_as}, for the computation of lower
    bounds. The branch-and-bound algorithm we consider is based on the
    work in~\cite{buchheim2012}, where the unconstrained case is
    addressed.  As our branch-and-bound scheme aims at a fast
    enumeration of the nodes, we focus on bounds that can be computed
    quickly. A straightforward choice for determining lower bounds is
    to solve the continuous relaxation of~\eqref{QIP}.  Instead of
    considering the primal formulation of the continuous relaxation
    of~\eqref{QIP}, we deal with the dual one, which again is a convex
    QP, but with only non-negativity constraints, so that it can be
    solved by \FASTQPA.  The solution can be used as a lower bound
    for~$f$ over all feasible integer points and is as strong as the
    lower bound obtained by solving the primal problem, since strong
    duality holds.   

    Our branching strategy and its advantages are discussed in 
    Section~\ref{subsec:branching}. In Section~\ref{subsec:dual}, we have 
    a closer look at the relation between the primal and the dual problem, 
    while in Section~\ref{subsec:reopt} we shortly discuss the advantage 
    of reoptimization.  Using a predetermined branching order, some of
    the expensive calculations can be moved into a preprocessing
    phase, as described in Section~\ref{subsec:incremental}.

    \subsection{Branching}\label{subsec:branching}
    
    At every node in our branch-and-bound scheme, we branch by fixing a 
    single \textit{primal} variable in increasing distance to its value in 
    the solution of the continuous relaxation $x^\star$.
    For example, if the closest 
    integer value to $x^\star_i$ is $\lfloor  x^\star_i\rfloor$, we fix $x_i$ 
    to integer values $\lfloor x^\star_i\rfloor,\lceil x^\star_i\rceil, \lfloor  x^\star_i\rfloor-1, \lceil  x^\star_i\rceil+1$, and so on.
    After each branching step, the resulting subproblem
    is a quadratic programming problem of type~\eqref{QIP} again, with a dimension
    decreased by one. We are not imposing bound constraints on the integer variables (i.e. $x_i \le \lfloor x^\star_i\rfloor$ 
    and $x_i \ge \lfloor x^\star_i\rfloor$)
    since they are taken into account as fixings (i.e. $x_i = \lfloor  x^\star_i\rfloor$) in the construction of the reduced subproblem 
    by adapting properly the matrices $Q_\ell$ and $A_\ell$, the linear term $\bar c$, the constant term $\bar d$
    and the right hand side $\bar b$ (see Section~\ref{subsec:incremental}).
    The branching order of these variables at every level $\ell$ is set to $x_1,\dots,x_{n-\ell}$, assuming that $\ell$ variables are 
    already fixed. Hence, at every level we have a predetermined branching order. Let $x^\star_i$ be the value of the next branching variable 
    in the continuous minimizer. Then, by the strict convexity of $f$, all consecutive lower bounds obtained by 
    fixing $x_i$ to integer values in increasing distance to $x^\star_i$, on each side of $
    x^\star_i$, are increasing. Thus, we can cut off the current node of the tree 
    and all its outer siblings as soon as we fix a variable to some value for 
    which the resulting lower bound exceeds the current best known upper bound.     
    Since $f$ is strictly convex we get a finite algorithm even
    without bounds on the variables.

    Once all integer variables have been fixed, we compute the optimal
    solution of the QP problem in the reduced continuous subspace.  If
    the computed point is feasible, it yields a valid upper bound for
    the original problem.  As our enumeration is very fast and we use
    a depth-first approach, we do not need any initial feasible point
    nor do we apply primal heuristics.
    \subsection{Dual Approach}\label{subsec:dual}

    In the following, we derive the dual problem of the continuous
    relaxation of (\ref{QIP}) and point out some advantages when using
    the dual approach in the branch-and-bound framework.  The dual can
    be computed by first forming the Lagrangian of the relaxation
    \[\LL(x,\lambda)=x^\top Qx+c^\top x+d+\lambda^\top(Ax-b)\]
    and then, for fixed $\lambda$, minimizing $\LL$ with respect to
    the primal variables $x$.  As~$Q$ is assumed to be positive
    definite, the unique minimizer can be computed from the first
    order optimality condition
    \begin{align}\label{KKT-general}
      \nabla_x \LL(x,\lambda)=2Qx+c+A^\top\lambda=0\Longleftrightarrow x=-\frac{1}{2}Q^{-1}(c+A^\top\lambda).
    \end{align}
    Having $x$ as a function of $\lambda$, we can insert it into the
    Lagrangian $\LL$ yielding the following dual function
    \begin{align*}\LL(\lambda)=\lambda^\top\big(-\frac{1}{4}AQ^{-1}A^\top\big)\lambda-\big(b^\top+\frac{1}{2}c^\top
      Q^{-1}A^\top\big)\lambda-\frac{1}{4}c^\top Q^{-1}c+d.
    \end{align*}
    Defining $\widetilde{Q}:=\frac{1}{4}AQ^{-1}A^\top$,
    $\widetilde{c}:=\frac{1}{2}AQ^{-1}c+b$ and
    $\tilde{d}:=\frac{1}{4}c^\top Q^{-1}c-d$, we can thus write the
    dual of the continuous relaxation of \eqref{QIP} as
    \begin{align}\label{eq:dualprob}
      -\min\ &\lambda^\top\widetilde{Q}\lambda+\widetilde{c}^\top\lambda+\tilde{d}\nonumber\\
      \textnormal{s.t. } &\lambda\geq 0\\
      &\lambda\in\mathbb{R}^m.\nonumber
    \end{align}
    Note that (\ref{eq:dualprob}) is again a convex QP, since
    $\widetilde{Q}$ is positive semidefinite.

    The first crucial difference in considering the dual problem is
    that its dimension changed from $n$ to $m$, which is beneficial if
    $m\ll n$.  The second one is that $\lambda = 0$ is always feasible
    for~\eqref{eq:dualprob}.  Finally, note that having the optimal
    solution $\lambda^{\star}\in\mathbb{R}^m$ of (\ref{eq:dualprob}),
    it is easy to reconstruct the corresponding optimal primal
    solution $x^{\star}\in\mathbb{R}^n$ using the first order
    optimality condition (\ref{KKT-general}).

Within a branch-and-bound framework, a special feature of the dual
    approach is the \emph{early pruning}: we can stop the iteration process and prune the
    node as soon as the current iterate $\lambda_k$ is feasible and
    its objective function value exceeds the current upper bound,
    since each dual feasible solution yields a valid bound. 
    Note however that, in case we cannot prune, an optimal solution of the
    dual problem is required, since it is needed for the computation
    of the corresponding primal solution $x^{\star}$ which in turn is
    needed to decide the enumeration order in the branch-and-bound
    scheme.

 During the tree search it may occur that a node relaxation is infeasible due to the current fixings. 
 In this case infeasibility of the primal problem implies the unboundness of the dual problem. Therefore, during the solution process 
    of the dual problem, an iterate will be reached such that its objective function value exceeds the current upper bound and the node can be pruned. 
    This is why in our implementation of~\texttt{FAST-QPA} we set the following stopping criterion:
   the algorithm stops either if the norm of the projected gradient is less than a given optimality tolerance,
or if an iterate is computed such that its objective function value exceeds the current upper bound.
 More precisely, we declare optimality when the point $\lambda\in \R_+^m$ satisfies the following condition:
  \begin{equation*}
   \|\max\left \{0, -g(\lambda)\right\}\|\le 10^{-5}.
  \end{equation*}
By Propositions~\ref{projgrad0} and~\ref{qxk_inf}, we then have a guarantee that the algorithm stops after a finite number of iterations.

    \subsection{Reoptimization}\label{subsec:reopt}

    At every node of the branch-and-bound tree, we use our algorithm
    \FASTQPA\ described in Section~\ref{sec:feas_as} for solving
    Problem \eqref{eq:dualprob}. A crucial advantage of using an
    active set method is the possibility of working with warmstarts,
    i.e., of passing information on the optimal active set from a
    parent node to its children. In the dual approach the dimension of
    all subproblems is~$m$, independently of the depth~$\ell$ in the
    branch-and-bound tree. When fixing a variable, only the objective
    function changes, given by $\widetilde{Q}$, $\widetilde{c}$
    and~$\widetilde{d}$. So as the starting guess in a
    child node, we choose
    $\mathcal{A}^0:=\mathcal{A}(\lambda^{\star})$, i.e., we use the
    estimated active set for the optimal solution~$\lambda^{\star}$ of the parent
    node, according to Step~3 of Algorithm~\ref{fig:FAST-QPA}. We also pass the solution 
    $\lambda^\star$ to the child nodes to initialize the variables in the line search 
    procedure in Step~4 of Algorithm~\ref{fig:FAST-QPA}, that is we set $\tilde x^0_{\mathcal N^0} = \lambda^\star_{\mathcal N^0}$.  
    Our experimental 
    results presented in Section~\ref{sec:exp}
    show that this warmstarting approach reduces the average number of
    iterations of \FASTQPA\ significantly.

    \subsection{Incremental Computations and Preprocessing}\label{subsec:incremental}

    A remarkable speed-up can be achieved by exploiting the fact that
    the subproblems enumerated in the branch-and-bound tree are
    closely related to each other. Let $\ell\in\{0,\dots,n_1-1\}$ be the
    current depth in the branch-and-bound tree and recall that after
    fixing the first~$\ell$ variables, the problem reduces to the
    minimization of
    \begin{align}\bar f\colon \mathbb{Z}^{n_1-\ell}\times\mathbb{R}^{n-n_1}\rightarrow \mathbb{R},~x\mapsto x^\top Q_\ell x+\bar c^\top x+\bar d\nonumber\end{align}
      over the feasible region $\bar{\mathcal{F}}=\{x\in\mathbb{Z}^{n_1-\ell}\times\mathbb{R}^{n-n_1}\mid
      A_\ell x\leq \bar{b}\}$, where $Q_\ell\succ 0$ is obtained by
      deleting the corresponding $\ell$ rows and columns of $Q$ and $\bar c$
      and~$\bar d$ are adapted properly by
      \[\bar{c}_{j-\ell}:=c_j+2\sum_{i=1}^\ell q_{ij}r_i,\ \textnormal{for }j=\ell+1,\dots,n\]
      and 
      \[\bar{d}:=d+\sum_{i=1}^\ell c_ir_i+\sum_{i=1}^\ell\sum_{j=1}^\ell q_{ij}r_ir_j,\]
      where $r=(r_1,\dots,r_\ell)\in\mathbb{Z}^\ell$ is the current fixing at depth $\ell$.
      Similarly, $A_\ell$ is obtained by deleting the corresponding~$\ell$ columns
      of $A$ and the reduced right hand side $\bar{b}$ is updated according
      to the current fixing.

      Since we use a predetermined branching order, the reduced matrices $Q_\ell$, $Q_\ell^{-1}$ and~$A_\ell$
      only depend on the depth $\ell$, but not on the specific
      fixings. Along with the reduced matrix $Q_\ell$, the quadratic
      part of the reduced dual objective function $\widetilde{Q}_\ell$
      can then be computed in the preprocessing phase, because they 
      only depend on $Q_\ell$ and $A_\ell$.  The predetermined branching 
      order also allows the computation of the maximum eigenvalues
      $\lambda_{max}(\widetilde{Q}_\ell)$ in the preprocessing phase,
      needed for ensuring proper convergence of our active set method
      as described in Section~\ref{sec:feas_as}; compare~\eqref{eps-prop2}.

      Concerning the linear part $\widetilde{c}$ and the constant part
      $\widetilde{d}$ of the dual reduced problem, both can be
      computed incrementally in linear time per node:
      let~$r=(r_1,\dots,r_\ell)\in\mathbb{Z}^l$ be the current fixing
      at depth $\ell$. By definition of~$\widetilde{c}$, we have
      \[\widetilde{c}(r)=\frac{1}{2}A_\ell Q_\ell^{-1}c(r)+b(r)\;,\]
      where the suffix~$(r)$ always denotes the corresponding data after
      fixing the first~$\ell$ variables to~$r$.
      \begin{theorem}
        After a polynomial time preprocessing, the vector~$\widetilde{c}(r)$ can be constructed
        incrementally in $O(n-\ell+m)$ time per node.
        \label{theorem:1}
      \end{theorem}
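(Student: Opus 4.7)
The plan is to maintain, along the current path of the branch-and-bound tree, two auxiliary vectors
$$v^{(\ell)} := Q_\ell^{-1}\,c(r)\in\R^{n-\ell}\qquad\text{and}\qquad w^{(\ell)} := A_\ell v^{(\ell)}\in\R^m,$$
so that by definition $\widetilde c(r)=\tfrac{1}{2}w^{(\ell)}+b(r)$. Writing $r=(r',r_\ell)$ with $r'\in\Z^{\ell-1}$ the fixing at the parent node, one has the trivial identity $b(r)=b(r')-r_\ell A_{\cdot,\ell}$, so $b(r)$ is updated in $O(m)$ time. It therefore suffices to exhibit incremental updates that compute $v^{(\ell)}$ from $v^{(\ell-1)}$ in $O(n-\ell)$ operations and $w^{(\ell)}$ from $w^{(\ell-1)}$ in $O(m)$ operations, relying only on data that may be assembled once in a polynomial-time preprocessing phase.

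For the update of $v^{(\ell)}$ I would exploit the symmetric block structure
$$Q_{\ell-1}=\begin{pmatrix} q_{\ell\ell} & p_\ell^\top \\ p_\ell & Q_\ell \end{pmatrix},\qquad p_\ell:=(q_{\ell,\ell+1},\ldots,q_{\ell,n})^\top,$$
and the associated Schur-complement inversion formula with $s:=q_{\ell\ell}-p_\ell^\top Q_\ell^{-1}p_\ell$. Applying it to $c(r')=(c(r')_1,c(r')_{2:n-\ell+1})$, one finds after a short calculation that the bottom $(n-\ell)$-block of $v^{(\ell-1)}=Q_{\ell-1}^{-1}c(r')$ satisfies
$$Q_\ell^{-1}c(r')_{2:n-\ell+1}=v^{(\ell-1)}_{2:n-\ell+1}+v^{(\ell-1)}_1\,u_\ell,\qquad u_\ell:=Q_\ell^{-1}p_\ell.$$
Combined with the identity $c(r)=c(r')_{2:n-\ell+1}+2r_\ell p_\ell$, which follows directly from the update rule for $\bar c$ stated above the theorem, this gives the clean recurrence
$$v^{(\ell)}=v^{(\ell-1)}_{2:n-\ell+1}+\bigl(v^{(\ell-1)}_1+2r_\ell\bigr)\,u_\ell.$$
Since $u_\ell$ depends only on $\ell$, it can be precomputed, and the update then costs $O(n-\ell)$.

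Multiplying the same recurrence on the left by $A_\ell$, and using the column decomposition $A_{\ell-1}=[A_{\cdot,\ell}\mid A_\ell]$ to write $A_\ell v^{(\ell-1)}_{2:n-\ell+1}=w^{(\ell-1)}-v^{(\ell-1)}_1\,A_{\cdot,\ell}$, yields
$$w^{(\ell)}=w^{(\ell-1)}-v^{(\ell-1)}_1\,A_{\cdot,\ell}+\bigl(v^{(\ell-1)}_1+2r_\ell\bigr)\,(A_\ell u_\ell).$$
Both $A_{\cdot,\ell}$ and $A_\ell u_\ell$ are $m$-vectors that depend only on $\ell$ and so can be tabulated in preprocessing, giving an $O(m)$ update. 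Assembling $\widetilde c(r)=\tfrac{1}{2}w^{(\ell)}+b(r)$ contributes another $O(m)$, for a total of $O(n-\ell+m)$ per node. The full preprocessing reduces to computing $Q_\ell^{-1}$, $p_\ell$, $u_\ell$ and $A_\ell u_\ell$ for every admissible depth $\ell$, which is clearly polynomial.

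The main obstacle, in my view, is the algebraic manipulation of the block inverse. The raw Schur-complement expansion of $Q_{\ell-1}^{-1}$ produces a rank-one correction $s^{-1}Q_\ell^{-1}p_\ell p_\ell^\top Q_\ell^{-1}$ whose appearance seems to force the inaccessible inner product $p_\ell^\top Q_\ell^{-1}c(r')_{2:n-\ell+1}$ into the update. The key observation is that this inner product is already encoded in $v^{(\ell-1)}_1=s^{-1}\bigl(c(r')_1-p_\ell^\top Q_\ell^{-1}c(r')_{2:n-\ell+1}\bigr)$, so that the scalar $s$ cancels out and only the precomputed vector $u_\ell$ remains. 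Once this cancellation is spotted, the remaining steps are purely bookkeeping.
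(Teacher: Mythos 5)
Your proposal is correct and follows essentially the same scheme as the paper's proof: you maintain the (scaled) unconstrained minimizer $v^{(\ell)}=Q_\ell^{-1}c(r)=-2y(r)$ and its product with $A_\ell$ incrementally using depth-dependent precomputed vectors, peel one column of $A$ off the parent's matrix-vector product in $O(m)$ time, and update $b(r)$ incrementally, exactly as in the paper. The only difference is that the paper imports the key recurrence $y(r)=[y(r')+\alpha z^{\ell-1}]_{1,\dots,n-\ell}$ from Buchheim et al.~\cite{buchheim2012}, with your $u_\ell=Q_\ell^{-1}p_\ell$ playing the role of (the truncation of) $-z^{\ell-1}$, whereas you re-derive it self-containedly via the Schur-complement block inversion and the cancellation of $s$ through $v^{(\ell-1)}_1$ --- a nice self-contained touch, but not a different route.
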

      \begin{proof}	
        Defining $y(r):=-\frac{1}{2}Q_\ell^{-1}c(r)$, we have
        \[\frac{1}{2}A_\ell Q_\ell^{-1}c(r)=-A_\ell\cdot y(r).\]
        Note that~$y(r)$ is the unconstrained continuous minimizer
        of~$f(r)$. In~\cite{buchheim2012}, it was shown that $y(r)$ can be
        computed incrementally by \[y(r):=[y(r')+\alpha
          z^{\ell-1}]_{1,\dots,n-\ell}\in\mathbb{R}^{n-\ell}\] for some vector
        $z^{\ell-1}\in\mathbb{R}^{n-\ell+1}$ and $\alpha:=r_{\ell}-y(r')_\ell\in\mathbb{R}$,
        where $r'=(r_1,\dots,r_{\ell-1})$ is the fixing at the
        parent node. This is due to the observation that the continuous minima according 
        to all possible fixings of the next variable lie on a line, for which $z^{\ell-1}$ is the direction. 
        It can be proved that the vectors~$z^{\ell-1}$ only depend on the depth~$\ell$ and
        can be computed in the preprocessing~\cite{buchheim2012}. Updating~$y$
        thus takes~$O(n-\ell)$ time. We now have
        \begin{align*}
          \hspace*{-0.4cm}\widetilde{c}(r)&=-A_\ell[y(r')+\alpha z^{\ell-1}]_{1,\dots,n-\ell}+b(r)\\
          &=-A_\ell[y(r')]_{1,\dots,n-\ell}-\alpha A_\ell[z^{\ell-1}]_{1,\dots,n-\ell}+b(r) \\ 
          &=-(A_{\ell-1}y(r')-y(r')_{n-\ell+1}\cdot A_{\lcdot,n-\ell+1})-\alpha A_\ell[z^{\ell-1}]_{1,\dots,n-\ell}+b(r).	
        \end{align*}
        In the last equation, we used the fact that the first part of the
        computation can be taken over from the parent node by subtracting
        column $n-\ell+1$ of $A$, scaled by the last component of $y(r')$, from
        $A_{\ell-1}y(r')$, which takes~$O(m)$ time. The second part
        $A_\ell[z^{\ell-1}]_{1,\dots,n-\ell}$ can again be computed in the
        preprocessing. The result then follows from the fact that
        also~$b(r)$ can easily be computed incrementally
        from~$b(r')$ in~$O(m)$ time.
      \end{proof}

      \begin{theorem}
        After a polynomial time preprocessing, the scalar $\tilde{d}(r)$ can be
        constructed incrementally in~$O(n-\ell)$ time per node.
        \label{theorem:2}
      \end{theorem}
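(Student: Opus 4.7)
The plan is to reduce the claim to the incremental scheme from~\cite{buchheim2012} that is already exploited in the proof of Theorem~\ref{theorem:1}, by first rewriting $\tilde d(r)$ in terms of the continuous unconstrained minimum value of $\bar f$. Using the first-order condition $y(r) = -\tfrac{1}{2} Q_\ell^{-1} \bar c(r)$ coming from~\eqref{KKT-general}, one has $\bar c(r) = -2 Q_\ell y(r)$, which immediately yields
\[\tfrac{1}{4}\bar c(r)^\top Q_\ell^{-1} \bar c(r) = y(r)^\top Q_\ell y(r) \quad \text{and} \quad \bar c(r)^\top y(r) = -2\, y(r)^\top Q_\ell y(r).\]
Evaluating $\bar f$ at its own minimizer then gives $\bar f(y(r)) = -y(r)^\top Q_\ell y(r) + \bar d(r)$, so that
\[\tilde d(r) = \tfrac{1}{4} \bar c(r)^\top Q_\ell^{-1} \bar c(r) - \bar d(r) = -\bar f(y(r)).\]
It therefore suffices to show that the scalar $\bar f(y(r))$ can be maintained incrementally in $O(n-\ell)$ time per node.

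For this update I would reuse the one-dimensional parametric structure underlying the proof of Theorem~\ref{theorem:1}: all unconstrained continuous minimizers compatible with a given parent fixing lie on the affine line $y(r') + t\, z^{\ell-1}$, where $z^{\ell-1}$ depends only on the depth and is tabulated in the preprocessing. Since $y(r')$ is the unconstrained minimum of the parent's reduced objective, its gradient vanishes there, so the restriction of $\bar f$ to this line is a one-dimensional quadratic whose minimum sits at $t = 0$. Consequently
\[\bar f(y(r)) = \bar f(y(r')) + \alpha^2\, \kappa_\ell,\]
where $\alpha = r_\ell - y(r')_\ell$ is exactly the scalar already produced in the update of $y(r)$ in Theorem~\ref{theorem:1}, and $\kappa_\ell := (z^{\ell-1})^\top Q_{\ell-1}\, z^{\ell-1}$ is a depth-only number that I would store once in preprocessing.

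Given the parent's stored value $\bar f(y(r'))$, the scalar $\alpha$, and the tabulated $\kappa_\ell$, the new value $\bar f(y(r))$---and hence $\tilde d(r)$---is obtained in $O(1)$ additional arithmetic operations. The $O(n-\ell)$ bound in the statement accommodates the concurrent incremental update of the vector $y(r)$ itself, which the branch-and-bound scheme requires anyway and which costs $O(n-\ell)$ by Theorem~\ref{theorem:1}. The main conceptual step is the identification $\tilde d(r) = -\bar f(y(r))$; once that is in place, the remainder is bookkeeping on top of the parametric-line structure already established in~\cite{buchheim2012}, and the only mild subtlety is verifying that the depth-dependent direction $z^{\ell-1}$ is independent of the specific fixings $r'$ so that $\kappa_\ell$ can indeed be precomputed.
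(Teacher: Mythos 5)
Your proof is correct, but it takes a genuinely different route from the paper's. The paper's own proof is essentially one line: since $\tilde d(r)=\tfrac14 c(r)^\top Q_\ell^{-1}c(r)-d(r)$ and $y(r)=-\tfrac12 Q_\ell^{-1}c(r)$, one has $\tilde d(r)=-\tfrac12 c(r)^\top y(r)-d(r)$, and because both $c(r)$ and $y(r)$ are maintained incrementally in $O(n-\ell)$ time per node by the results of~\cite{buchheim2012}, a single inner product of length $n-\ell$ gives the stated bound directly. You instead establish the identity $\tilde d(r)=-\bar f(y(r))$ (your algebra here checks out) and then maintain this scalar via the recursion $\bar f(y(r))=\bar f(y(r'))+\alpha^2\kappa_\ell$ with $\kappa_\ell=(z^{\ell-1})^\top Q_{\ell-1}z^{\ell-1}$ tabulated per depth; this is legitimate because the gradient of the parent's reduced objective vanishes at $y(r')$, and the child objective evaluated at $y(r)$ equals the parent objective evaluated at the untruncated point $y(r')+\alpha z^{\ell-1}$, whose newly fixed coordinate equals $r_\ell$ --- the same structural facts from~\cite{buchheim2012} that the paper already invokes in the proof of Theorem~\ref{theorem:1}, including the depth-independence of $z^{\ell-1}$ that makes $\kappa_\ell$ precomputable. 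Your route buys a slightly sharper result: the scalar update itself costs only $O(1)$ per node (the $O(n-\ell)$ in the statement is consumed solely by the update of $y(r)$, which the scheme needs anyway), and it yields a pleasant interpretation of $-\tilde d(r)$ as the unconstrained minimum value of $\bar f$, i.e., the dual objective value at $\lambda=0$. What it costs relative to the paper's argument: a bit more preprocessing (the depth-only scalars $\kappa_\ell$ and the root value of $\bar f(y)$, both clearly polynomial) and a purely recursive scalar that could accumulate rounding error down a deep branch-and-bound path, whereas the paper's direct inner-product evaluation recomputes $\tilde d(r)$ from quantities refreshed at each node.
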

      \begin{proof}
        Recalling that
        $$\tilde{d}(r)=\frac{1}{4}c(r)^\top Q_\ell^{-1}c(r)-d(r)\;,$$
        this follows from the fact that $y(r)=-\frac 12Q_\ell^{-1}c(r)$
        and~$c(r)$ can be computed in~$O(n-\ell)$ time per node~\cite{buchheim2012}.
      \end{proof}

      \begin{corollary}
        After a polynomial time preprocessing, the dual problem~\eqref{eq:dualprob} can be constructed in~$O(n-\ell+m)$ time per node.
      \end{corollary}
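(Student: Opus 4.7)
The plan is to obtain the corollary as a direct bookkeeping consequence of Theorems~\ref{theorem:1} and~\ref{theorem:2}, together with the observation that the quadratic part of the dual objective depends only on the depth~$\ell$. Recall that specifying Problem~\eqref{eq:dualprob} at a node with fixing~$r$ at depth~$\ell$ amounts to producing the triple $(\widetilde Q_\ell, \widetilde c(r), \tilde d(r))$, so I would bound the per-node construction cost by summing the cost of each of these three ingredients.

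First I would argue that $\widetilde Q_\ell = \tfrac14 A_\ell Q_\ell^{-1} A_\ell^\top$ can be moved entirely into preprocessing. Since we use a predetermined branching order, the reduced matrices $Q_\ell$, $Q_\ell^{-1}$ and $A_\ell$ depend only on~$\ell$, and hence so does $\widetilde Q_\ell$. As already noted in Section~\ref{subsec:incremental}, $\widetilde Q_\ell$ (and the eigenvalue $\lambda_{\max}(\widetilde Q_\ell)$ used to choose~$\varepsilon$ in \eqref{eps-prop2}) can thus be computed once per level in a polynomial-time preprocessing, contributing nothing to the per-node cost.

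For the two data-dependent ingredients I would simply quote the preceding two theorems: by Theorem~\ref{theorem:1}, $\widetilde c(r)$ is available in $O(n-\ell+m)$ time per node, and by Theorem~\ref{theorem:2}, $\tilde d(r)$ is available in $O(n-\ell)$ time per node, each after a polynomial-time preprocessing. Summing these bounds yields an overall per-node cost of $O(n-\ell+m) + O(n-\ell) + O(1) = O(n-\ell+m)$, which is the claim. I do not foresee any real obstacle here: the only content of the corollary beyond Theorems~\ref{theorem:1} and~\ref{theorem:2} is the remark that $\widetilde Q_\ell$ is fixing-independent, so the whole argument reduces to a two-line combination of facts already established.
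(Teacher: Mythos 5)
Your proposal is correct and matches the paper's (implicit) argument exactly: the corollary is stated without proof precisely because it follows by combining Theorems~\ref{theorem:1} and~\ref{theorem:2} with the observation, already made in Section~\ref{subsec:incremental}, that $\widetilde Q_\ell$ depends only on the depth~$\ell$ and is therefore computable in preprocessing. Your accounting $O(n-\ell+m)+O(n-\ell)+O(1)=O(n-\ell+m)$ is exactly the intended bookkeeping.
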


      Besides the effort for solving the QP with the active set method,
      computing the optimal solution of the primal problem from the dual solution is the most time consuming task in each node. 
      The following observation is used to speed up its computation.
      \begin{theorem}
        After a polynomial time preprocessing, the optimal primal
        solution~$x^{\star}(r)$ can be computed from the optimal dual
        solution~$\lambda^{\star}(r)$ in $O(m\cdot (n-\ell))$ time per node.
        \label{theorem:3}
      \end{theorem}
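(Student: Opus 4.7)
The plan is to use the primal-dual relationship from the first order optimality condition~\eqref{KKT-general} adapted to the reduced problem at depth $\ell$, namely
\begin{equation*}
x^{\star}(r) = -\tfrac{1}{2}Q_\ell^{-1}\bigl(c(r) + A_\ell^\top \lambda^{\star}(r)\bigr),
\end{equation*}
and split this into a part that can be reused from the parent node and a matrix--vector product whose matrix is depth-dependent but fixing-independent, hence precomputable.

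First I would rewrite the expression above as
\begin{equation*}
x^{\star}(r) = y(r) \;-\; \tfrac{1}{2}\,M_\ell\,\lambda^{\star}(r),
\end{equation*}
where $y(r) := -\tfrac{1}{2}Q_\ell^{-1}c(r)$ is the unconstrained continuous minimizer of $\bar f$ at the current node and $M_\ell := Q_\ell^{-1}A_\ell^\top \in \mathbb{R}^{(n-\ell)\times m}$. The vector $y(r)$ has already been shown, in the proof of Theorem~\ref{theorem:1} (via the incremental scheme of~\cite{buchheim2012}), to be computable in $O(n-\ell)$ time per node after preprocessing.

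Next I would observe that the matrix $M_\ell$ depends only on the depth $\ell$ (through $Q_\ell$ and $A_\ell$), but not on the specific fixing vector $r$, because our branching order is predetermined. Hence all matrices $M_0,M_1,\dots,M_{n_1-1}$ can be computed once and for all in the preprocessing phase, in polynomial time: for each level $\ell$ one solves $Q_\ell M_\ell = A_\ell^\top$, which costs at most $O((n-\ell)^2 m + (n-\ell)^3)$ per level. Since the matrices $Q_\ell$ and $A_\ell$ themselves only depend on $\ell$, this preprocessing is well-defined and the cost is independent of the branch-and-bound run.

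Finally, at each node we form $M_\ell\lambda^{\star}(r)$ as a dense matrix--vector product of an $(n-\ell)\times m$ matrix with an $m$-vector, which takes exactly $O(m\cdot(n-\ell))$ time, and then subtract half of it from $y(r)$ in an additional $O(n-\ell)$ operations. The total per-node cost is therefore $O(m\cdot(n-\ell))+O(n-\ell) = O(m\cdot(n-\ell))$. The only subtlety, and essentially the sole nontrivial step, is justifying that storing and reusing the $M_\ell$ across the tree is legitimate, which follows directly from the predetermined branching order: different nodes at the same depth $\ell$ correspond to different fixing vectors $r$ but share the same reduced matrices $Q_\ell$ and $A_\ell$, and therefore the same $M_\ell$.
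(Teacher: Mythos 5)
Your proof is correct and takes essentially the same approach as the paper: the paper writes $x^{\star}(r)=y(r)+\sum_{i=1}^m\lambda^{\star}(r)_i\bigl(-\tfrac{1}{2}Q_\ell^{-1}a_i\bigr)$, precomputes the vectors $-\tfrac{1}{2}Q_\ell^{-1}a_i$ (exactly the columns of your $-\tfrac{1}{2}M_\ell$) in the preprocessing phase, and reuses the incremental $O(n-\ell)$ update of $y(r)$, just as you do. Your explicit preprocessing cost bound and the remark that the predetermined branching order makes $M_\ell$ fixing-independent are merely details the paper leaves implicit.
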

      \begin{proof}
        From \eqref{KKT-general} we derive 
        \begin{align*}
          x^{\star}(r)&=-\frac{1}{2}Q_\ell^{-1}\left(\sum_{i=1}^m\lambda^{\star}(r)_ia_i+c(r)\right) \\
          &=y(r)+\sum_{i=1}^m\lambda^{\star}(r)_i\left(-\frac{1}{2}Q_\ell^{-1}a_i\right)\;.\end{align*}
        The first part can again be computed incrementally in $O(n-\ell)$ time per node. For the second part, we observe that $-\frac{1}{2}Q_\ell^{-1}a_i$ can be computed in the preprocessing phase for all $i=1,\dots,m$.
      \end{proof}

      The above results show that the total running time per node is linear
      in~$n-\ell$ when the number~$m$ of constraints is considered a
      constant and when we make the reasonable assumption that Problem~\eqref{quadpr} can be solved in
      constant time in fixed dimension.

      \subsection{Postprocessing}
Using \FASTQPA, we can solve the dual problem of the continuous relaxation in every
node of the branch-and-bound tree, in case it admits a solution, up to high precision.
If the dual problem is bounded, strong duality guarantees the
existence of a primal feasible solution. However, in practice, computing the primal
solution by formula~\eqref{KKT-general}, can affect its feasibility. 
This numerical problem is negligible in the pure integer case 
(where we end up fixing all the variables), while it becomes crucial in the mixed integer case. 
Indeed, when dealing with mixed integer problems, the primal solution of the relaxation in the optimal branch-and-bound node 
(i.e. the node that gives the optimal value) is actually used to build up the solution of the original problem.


Hence, in case a high precision is desired for the primal solution,
we propose the following approach: the branch-and-bound node related
to the optimal value gives an optimal fixing
of the integer variables. Therefore, we consider the convex QP that
arises from Problem~\eqref{QIP} under these optimal fixings. Then, we call a 
generic solver to deal with this QP problem (the
values of the primal continuous variables 
obtained by formula \eqref{KKT-general}
can be used as a starting point). 
Since the hardness of Problem~\eqref{QIP} is due to the
integrality constraints on the variables, the running
time for this postprocessing step is negligible.

In the experiments reported below, we apply this approach using
\texttt{CPLEX 12.6} as solver for the QP problem (we choose
$10^{-6}$ as feasibility tolerance). The time required for the
postprocessing step is included in all stated running times.
As noticed above, this postprocessing 
phase is not needed in the pure integer case.

      \section{Experimental Results}\label{sec:exp}

      In order to investigate the potential of our algorithm \texttt{FAST-QPA-BB}, we implemented it
      in \texttt{C++/Fortran~90} and compared it to the MIQP solver of
      \texttt{CPLEX 12.6}. We also tested the branch-and-bound solver
      \texttt{B-BB} of \texttt{Bonmin 1.74}. However, we did not include the
      running times for the latter into the tables since its performance was 
      not competitive at all, not even for mixed-integer instances with a few 
      number of integer variables. All experiments were carried out on Intel Xeon processors
      running at 2.60\;GHz. We used an absolute optimality tolerance
      and a relative feasibility tolerance
      of $10^{-6}$ for all algorithms.

      In order to obtain a feasible solution to Problem~\eqref{QIP}
      and thus an initial upper bound -- or to determine
      infeasibility of~\eqref{QIP} -- we replace the objective
      function in~\eqref{QIP} by the zero function and use the
      \texttt{CPLEX 12.6} ILP solver. In principle, the algorithm also
      works when setting the initial upper bound to a very large
      value. Then it is either replaced as soon as a feasible solution
      is found in some branch-and-bound node, or it will remain
      unchanged until the algorithm terminates, in which case
      Problem~\eqref{QIP} must be infeasible.

      Altogether, we randomly
      generated 1600 different problem instances for~\eqref{QIP},
      considering percentages of integer variables
      $p:=\frac{n_1}{n}\in\{0.25, 0.50, 0.75, 1.0\}$. For $p=0.25$
      (0.5 / 0.75 / 1.0), we chose $n\in\{50,100,150,200,250\}$
      $(\{50,75,100,125,150\}$ / $\{50,60,70,80,90\}$ /
      $\{50,55,60,65,70\})$, respectively. The number of constraints
      $m$ was chosen in $\{1,10,25,50\}$. For each combination of $p$,
      $n$ and $m$, we generated 10 instances. For every group of
      instances with a given percentage of integer variables~$p$, the
      parameter $n$ was chosen up to a number such that at least one
      of the tested algorithms was not able to solve all of the 10
      instances to optimality for $m=1$ within our time limit of 3~cpu
      hours.

      For generating the positive definite matrix~$Q$, we chose $n$
      eigenvalues $\lambda_i$ uniformly at random from $[0,1]$ and
      orthonormalized $n$ random vectors $v_i$, each entry of which
      was chosen uniformly at random from $[-1,1]$, then we set
      $Q=\sum_{i=1}^n\lambda_iv_iv_i^{\top}$.
      The entries of~$c$ were chosen uniformly at random from
      $[-1,1]$, moreover we set~$d=0$. For the entries of~$A$ and~$b$,
      we considered two different choices:
      \begin{itemize}
      \item[(a)] the entries of~$b$ and~$A$ were chosen uniformly at random from $[-1,1]$,
      \item[(b)] the entries of~$A$ were chosen uniformly at random
        from $[0,1]$ and we set $b_i=\frac 12{\sum_{j=1}^na_{ij}}$, $i=1,\dots,m$.
      \end{itemize}
      The constraints of type~(b) are commonly used to create
      hard instances for the knapsack problem.
At \url{www.mathematik.tu-dortmund.de/lsv/instances/MIQP.tar.gz} all instances are publicly available.

      The performance of the considered algorithms for instances of type~(a) can be
      found in Tables~\ref{tab:100percent}--\ref{tab:25percent}. 
      We do not inlcude the tables for 
      instances of type~(b), since there are no significant
      differences in the results, except that they are in general 
      easier to solve for our algorithm as well as for \texttt{CPLEX}.
      All running
      times are measured in cpu seconds. The tables include the following
      data for the comparison between \texttt{FAST-QPA-BB} and \texttt{CPLEX
        12.6}: numbers of instances solved within the time limit, average preprocessing time, 
      average running times, average number of branch-and-bound nodes, average number of iterations
      of \FASTQPA\ in the root node and average number of iterations
      of \FASTQPA\ per node in the rest of the enumeration tree. All averages are taken over
      the set of instances solved within the time limit.

      \begin{table}
        {\small
          \centering

          \begin{tabular}{|c|c||c|c|c|c|c|c||c|c|c|} 
            \hline
            \multicolumn{2}{|c||}{inst} & \multicolumn{6}{|c||}{\texttt{FAST-QPA-BB}} & \multicolumn{3}{|c|}{\texttt{CPLEX 12.6}} \\
            $n$ & $m$ & \# & ptime & time & nodes  & it root & it & \# & time & nodes \\
            \hline
            \hline
            50 &   1 & 10 & 0.00 &   6.83 & 9.24e+6 &       1.50 &     1.16  & 10 &    53.50 & 5.06e+5 \\
            50 &  10 & 10 & 0.00 &  83.15 & 3.16e+7 &      3.80 &     1.54 & 10 &   189.50 & 1.45e+6 \\ 
            50 &  25 &  9 & 0.01 & 2337.87 & 1.62e+8 &      8.56 &     2.09 & 10 & 2413.50 & 1.25e+7 \\ 
            50 &  50 & 0 & - & - & - & - & - & 0 & - & - \\ \hline
            55 &   1 & 10 & 0.00 &   28.47 & 3.62e+7 &       1.60 &     1.25 & 10 &   210.71 & 1.75e+6 \\
            55 &  10 & 10 & 0.00 &  427.66 & 1.46e+8 &      3.40 &     1.48 & 10 &  1154.91 & 7.53e+6 \\ 
            55 &  25 &  4 & 0.01 & 3843.30 & 3.37e+8 &      7.50 &     1.86 & 5 &  3949.40 & 1.82e+7 \\
            55 &  50 & 0 & - & - & - & - & - & 0 & - & - \\ \hline
            60 &   1 & 10 & 0.00 &  133.32 & 1.60e+8 &       1.30 &     1.11 & 9 &   353.89 & 2.61e+6 \\
            60 &  10 &  8 & 0.01 & 1894.81 & 6.86e+8 &       2.62 &     1.51 &  7 &  3477.91 & 2.04e+7 \\ 
            60 &  25 &  2 & 0.02 & 8963.19 & 7.55e+8 &      5.50 &     2.00 & 1 &  6149.31 & 2.68e+7 \\
            60 &  50 & 0 & - & - & - & - & - & 0 & - & - \\ \hline
            65 &   1 & 10 & 0.01 &  349.11 & 4.13e+8 &       1.40 &     1.15 & 10 &  2105.36 & 1.31e+7 \\
            65 &  10 &  8 & 0.01 &  4010.42 & 1.50e+9 &      2.75 &     1.48 &  4 &  5503.84 & 2.83e+7 \\ 
            65 &  25 & 0 & - & - & - & - &  - & 0 & - & - \\ 
            65 &  50 & 0 & - & - & - & - &  - & 0 & - & - \\ \hline
            70 &   1 & 10 & 0.01 &  1113.47 & 1.30e+9 &       1.60 &     1.27 & 7 &  5133.59 & 2.88e+7  \\
            70 &  10 &  4 & 0.01 &  6915.67 & 2.38e+9 &       2.50 &     1.51 & 0 & - & - \\ 
            70 &  25 & 0 & - & - & - & - &  - & 0 & - & - \\ 
            70 &  50 & 0 & - & - & - & - &  - & 0 & - & - \\
            \hline
          \end{tabular}\vspace*{0.1cm}
          \caption{Results for instances of type~(a) with $p=1.0$.}
          \label{tab:100percent}

          \vspace{0.8cm}
          \centering
          \begin{tabular}{|c|c||c|c|c|c|c|c||c|c|c|} 
            \hline
            \multicolumn{2}{|c||}{inst} & \multicolumn{6}{|c||}{\texttt{FAST-QPA-BB}} & \multicolumn{3}{|c|}{\texttt{CPLEX 12.6}} \\
            $n$ & $m$ & \# & ptime & time & nodes & it root & it & \# & time & nodes \\
            \hline
            \hline            

60	&	1	&	10	&	0.00	&	1.81	&	2.01e+6	&	1.30	&	1.11	&	10	&	12.12	&	1.12e+5		\\
60	&	10	&	10	&	0.01	&	5.65	&	1.82e+6	&	2.80	&	1.44	&	10	&	17.63	&	1.34e+5		\\
60	&	25	&	10	&	0.02	&	32.60	&	2.36e+6	&	9.10	&	1.65	&	10	&	26.49	&	1.51e+5		\\
60	&	50	&	10	&	0.08	&	641.91	&	6.64e+6	&	55.80	&	2.14	&	10	&	129.51	&	4.80e+5		\\ \hline
70	&	1	&	10	&	0.01	&	12.15	&	1.30e+7	&	1.60	&	1.20	&	10	&	84.02	&	6.67e+5		\\
70	&	10	&	10	&	0.01	&	27.11	&	8.19e+6	&	3.40	&	1.43	&	10	&	77.70	&	5.07e+5		\\
70	&	25	&	10	&	0.03	&	159.81	&	1.23e+7	&	9.20	&	1.60	&	10	&	183.43	&	8.85e+5		\\
70	&	50	&	10	&	0.08	&	2222.15	&	2.79e+7	&	18.60	&	1.96	&	10	&	593.70	&	1.89e+6		\\ \hline
80	&	1	&	10	&	0.02	&	65.98	&	6.51e+7	&	1.40	&	1.12	&	10	&	446.60	&	2.91e+6		\\
80	&	10	&	10	&	0.03	&	151.77	&	4.37e+7	&	3.80	&	1.42	&	10	&	386.33	&	2.08e+6		\\
80	&	25	&	10	&	0.04	&	963.38	&	7.06e+7	&	10.30	&	1.60	&	10	&	791.62	&	3.20e+6		\\
80	&	50	&	7	&	0.09	&	3339.39	&	5.38e+7	&	15.29	&	1.82	&	9	&	1903.79	&	5.27e+6		\\ \hline
90	&	1	&	10	&	0.03	&	417.90	&	3.79e+8	&	1.30	&	1.11	&	10	&	2332.52	&	1.25e+7		\\
90	&	10	&	10	&	0.04	&	1538.99	&	4.36e+8	&	3.00	&	1.40	&	10	&	2745.36	&	1.26e+7		\\
90	&	25	&	10	&	0.06	&	4468.73	&	3.55e+8	&	5.90	&	1.55	&	9	&	3094.58	&	1.08e+7		\\
90	&	50	&	1	&	0.09	&	5361.11	&	1.02e+8	&	10.00	&	1.72	&	4	&	5199.56	&	1.27e+7		\\ \hline
100	&	1	&	6	&	0.05	&	1897.13	&	1.69e+9	&	1.33	&	1.10	&	5	&	5918.25	&	2.61e+7		\\
100	&	10	&	6	&	0.06	&	5893.96	&	1.52e+9	&	4.83	&	1.39	&	0	&	-	&	-		\\
100	&	25	&	1	&	0.06	&	4897.75	&	3.95e+8	&	5.00	&	1.51	&	0	&	-	&	-		\\
100	&	50	&	1	&	0.11	&	3622.67	&	8.05e+7	&	15.00	&	1.74	&	1	&	2664.13	&	5.47e+6		\\ \hline

          \end{tabular}\vspace*{0.1cm}
          \caption{Results for instances of type~(a) with $p=0.75$.}
          \label{tab:75percent}
        }
      \end{table}
 
      \begin{table}
        {\small
          \centering
          \begin{tabular}{|c|c||c|c|c|c|c|c||c|c|c|} 
            \hline
            \multicolumn{2}{|c||}{inst} & \multicolumn{6}{|c||}{\texttt{FAST-QPA-BB}} & \multicolumn{3}{|c|}{\texttt{CPLEX 12.6}} \\
            $n$ & $m$ & \# & ptime & time & nodes &  it root & it & \# & time & nodes \\
            \hline
            \hline
  
50	&	1	&	10	&	0.00	&	0.02	&	9.83e+3	&	1.50	&	1.16	&	10	&	0.18	&	1.26e+3		\\
50	&	10	&	10	&	0.00	&	0.03	&	5.46e+3	&	3.80	&	1.48	&	10	&	0.16	&	7.47e+2		\\
50	&	25	&	10	&	0.01	&	0.11	&	4.68e+3	&	8.20	&	1.71	&	10	&	0.23	&	8.70e+2		\\
50	&	50	&	10	&	0.06	&	1.14	&	8.30e+3	&	21.40	&	2.35	&	10	&	0.61	&	1.63e+3		\\ \hline
75	&	1	&	10	&	0.01	&	0.25	&	1.76e+5	&	1.60	&	1.21	&	10	&	2.20	&	1.30e+4		\\
75	&	10	&	10	&	0.02	&	0.68	&	1.65e+5	&	3.60	&	1.42	&	10	&	2.50	&	1.17e+4		\\
75	&	25	&	10	&	0.02	&	2.23	&	1.68e+5	&	9.80	&	1.57	&	10	&	3.80	&	1.47e+4		\\
75	&	50	&	10	&	0.09	&	12.08	&	1.70e+5	&	17.00	&	1.88	&	10	&	6.56	&	1.74e+4		\\ \hline
100	&	1	&	10	&	0.05	&	13.84	&	1.08e+7	&	1.50	&	1.15	&	10	&	76.62	&	3.20e+5		\\
100	&	10	&	10	&	0.05	&	14.13	&	3.46e+6	&	4.80	&	1.39	&	10	&	63.18	&	2.35e+5		\\
100	&	25	&	10	&	0.07	&	56.86	&	4.35e+6	&	8.50	&	1.53	&	10	&	87.39	&	2.55e+5		\\
100	&	50	&	10	&	0.13	&	322.71	&	5.39e+6	&	51.80	&	1.72	&	10	&	216.89	&	4.38e+5		\\ \hline
125	&	1	&	10	&	0.11	&	193.38	&	1.33e+8	&	1.40	&	1.13	&	10	&	2449.15	&	6.70e+6		\\
125	&	10	&	10	&	0.13	&	516.50	&	1.15e+8	&	3.40	&	1.37	&	10	&	2265.19	&	5.70e+6		\\
125	&	25	&	10	&	0.14	&	1342.59	&	1.03e+8	&	8.80	&	1.46	&	10	&	2111.49	&	4.22e+6		\\
125	&	50	&	10	&	0.21	&	4524.8	&	9.28e+7	&	52.60	&	1.62	&	10	&	2843.59	&	4.22e+6		\\ \hline
150	&	1	&	9	&	0.22	&	4011.63	&	2.34e+9	&	1.78	&	1.25	&	2	&	5026.81	&	9.86e+6		\\
150	&	10	&	6	&	0.24	&	6857.64	&	1.29e+9	&	3.50	&	1.38	&	0	&	-	&	-		\\
150	&	25	&	3	&	0.24	&	9440.33	&	6.65e+8	&	7.33	&	1.46	&	0	&	-	&	-		\\
150	&	50	&	0	&	     -    	&	      -  	&	    -     	&	    -  	&	        - 	&	0	&	-	&	-		\\ \hline

          \end{tabular}\vspace*{0.1cm}
          \caption{Results for instances of type~(a) with $p=0.5$.}
          \label{tab:50percent}
          \vspace{0.8cm}
          \centering
                    \begin{tabular}{|c|c||c|c|c|c|c|c||c|c|c|} 
            \hline
            \multicolumn{2}{|c||}{inst} & \multicolumn{6}{|c||}{\texttt{FAST-QPA-BB}} & \multicolumn{3}{|c|}{\texttt{CPLEX 12.6}} \\
            $n$ & $m$ & \# & ptime & time & nodes &  it root & it & \# & time & nodes \\
            \hline
            \hline

50	&	1	&	10	&	0.01	&	0.01	&	1.31e+2	&	1.50	&	1.18	&	10	&	0.01	&	3.44e+1		\\
50	&	10	&	10	&	0.00	&	0.01	&	1.96e+2	&	3.80	&	1.53	&	10	&	0.02	&	4.46e+1		\\
50	&	25	&	10	&	0.01	&	0.03	&	1.40e+2	&	8.20	&	1.98	&	10	&	0.03	&	3.85e+1		\\
50	&	50	&	10	&	0.06	&	0.10	&	1.14e+2	&	21.40	&	3.85	&	10	&	0.07	&	3.32e+1		\\ \hline
100	&	1	&	10	&	0.06	&	0.09	&	1.05e+4	&	1.50	&	1.16	&	10	&	0.50	&	1.06e+3		\\
100	&	10	&	10	&	0.06	&	0.11	&	3.88e+3	&	4.80	&	1.40	&	10	&	0.45	&	6.32e+2		\\
100	&	25	&	10	&	0.06	&	0.18	&	4.91e+3	&	8.50	&	1.56	&	10	&	0.55	&	7.36e+2		\\
100	&	50	&	10	&	0.12	&	0.75	&	8.24e+3	&	51.80	&	1.80	&	10	&	1.30	&	1.53e+3		\\ \hline
150	&	1	&	10	&	0.25	&	0.62	&	1.68e+5	&	1.70	&	1.23	&	10	&	7.49	&	1.31e+4		\\
150	&	10	&	10	&	0.22	&	0.97	&	1.24e+5	&	3.00	&	1.39	&	10	&	7.37	&	1.09e+4		\\
150	&	25	&	10	&	0.24	&	2.85	&	1.56e+5	&	6.40	&	1.46	&	10	&	12.81	&	1.65e+4		\\
150	&	50	&	10	&	0.31	&	4.74	&	7.77e+4	&	11.20	&	1.60	&	10	&	9.50	&	8.69e+3		\\ \hline
200	&	1	&	10	&	0.58	&	14.79	&	6.38e+6	&	1.50	&	1.16	&	10	&	283.67	&	2.73e+5		\\
200	&	10	&	10	&	0.52	&	21.32	&	3.00e+6	&	3.20	&	1.37	&	10	&	244.09	&	2.19e+5		\\
200	&	25	&	10	&	0.57	&	74.93	&	4.03e+6	&	5.90	&	1.44	&	10	&	344.88	&	2.56e+5		\\
200	&	50	&	10	&	0.68	&	400.99	&	7.44e+6	&	40.60	&	1.54	&	10	&	818.13	&	4.83e+5		\\ \hline
250	&	1	&	10	&	1.12	&	329.54	&	1.21e+8	&	1.50	&	1.15	&	7	&	4413.64	&	2.54e+6		\\
250	&	10	&	10	&	1.10	&	617.59	&	7.35e+7	&	3.80	&	1.35	&	9	&	5291.97	&	2.67e+6		\\
250	&	25	&	10	&	1.17	&	2115.57	&	1.01e+8	&	6.90	&	1.41	&	5	&	4984.95	&	2.08e+6		\\
250	&	50	&	9	&	1.28	&	3709.07	&	6.27e+7	&	49.11	&	1.51	&	4	&	5444.02	&	1.90e+6		\\ \hline

          \end{tabular}\vspace*{0.1cm}
          \caption{Results for instances of type~(a) with $p=0.25$.}
          \label{tab:25percent}
        }
      \end{table}

        \begin{table}
        {\small
          \centering
          \begin{tabular}{|c|c||c|c|c|c||c|c|c|c|} 
            \hline
            \multicolumn{2}{|c||}{inst} & \multicolumn{4}{|c||}{\texttt{FAST-QPA-BB}} & \multicolumn{4}{|c|}{\texttt{FAST-QPA-BB-NP}} \\
            $n$ & $m$ & \# & time & nodes & it & \# & time & nodes & it\\
            \hline
            \hline                  
            50 &   1 & 10 & 6.83 & 9.24e+6 &    1.16 & 10 &     8.79 & 9.24e+6 & 1.48\\  
            50 &  10 & 10 & 83.15 & 3.16e+7 &    1.54 & 10 &    145.95 & 3.16e+7 & 2.74\\ 
            50 &  25 &  9 & 2337.87 & 1.62e+8 &   2.09 & 7 &    4587.82    & 1.35e+8 & 4.25\\
            50 &  50 & 0  & -       &    -    &   -    & 0 &    - &  & -\\ \hline 
            55 &   1 & 10 & 28.47 & 3.62e+7 &     1.25 & 10 &   34.86 & 3.62e+7 & 1.74\\
            55 &  10 & 10 & 427.66 & 1.46e+8 &     1.48 & 10 &  743.12 & 1.46e+8 & 2.53\\ 
            55 &  25 &  4 & 3843.30 & 3.37e+8 &     1.86 & 3 &  4216.79 & 1.56e+8 & 3.63\\
            55 &  50 & 0 & - & - & - & 0 & - & - & -\\ \hline
            60 &   1 & 10 & 133.32 & 1.60e+8 &     1.11 & 10 &   154.74 & 1.60e+8 & 1.34\\
            60 &  10 &  8 & 1894.81 & 6.86e+8 &     1.51 &  8 &  3259.91 & 6.86e+8 & 2.62 \\ 
            60 &  25 &  2 & 8963.19 & 7.55e+8 &     2.00 & 0 &  - &  & -\\
            60 &  50 & 0 & - & - & - & 0 & - & - & -\\ \hline
            65 &   1 & 10 & 349.11 & 4.13e+8 &     1.15 & 10 &  410.14 & 4.13e+8 & 1.47\\
            65 &  10 &  8 & 4010.42 & 1.50e+9 &    1.48 &  7 &  5238.44 & 1.23e+9 & 2.52\\ 
            65 &  25 & 0 & - & - &  - & 0 & - & - & -\\ 
            65 &  50 & 0 & - & - &  - & 0 & - & - & -\\ \hline
            70 &   1 & 10 & 1113.47 & 1.30e+9 &    1.27 & 10 &  1318.95 & 1.30e+9  & 1.81\\
            70 &  10 &  4 & 6915.67 & 2.38e+9 &    1.51 & 1 & 8714.63 & 1.31e+9 & 2.95\\ 
            70 &  25 & 0 & - & - &  - & 0 & - & - & -\\ 
            70 &  50 & 0 & - & - &  - & 0 & - & - & -\\
            \hline
          \end{tabular}\vspace*{0.1cm}
          \caption{Results for instances of type~(a) with $p=1.0$ turning early pruning on/off.}
          \label{tab:early_pruning}
        }
      \end{table}

      From our experimental results, we can conclude that when $p\in\{0.25;0.50\}$ 
      \texttt{FAST-QPA-BB} clearly
      outperforms \texttt{CPLEX 12.6} for $m$ up to 25 and is at 
      least competitive to \texttt{CPLEX 12.6} if $p=0.75$. 
      For the mixed-integer case we can see that
      the average running times of \texttt{FAST-QPA-BB} compared to
      \texttt{CPLEX 12.6} are the better, the bigger the percentage of
      continuous variables is, even with a larger number of
      constraints. For the pure integer case we still outperform
      \texttt{CPLEX 12.6} for $m$ up to 10. For all instances, the preprocessing 
      time is negligible.
      
      This experimental study shows that \texttt{FAST-QPA-BB} is able
      to solve 644 instances of type (a) to optimality, 
      while \texttt{CPLEX 12.6} can only solve 598 instances. 
      Note that the average number of
      branch-and-bound nodes in our dual approach is approximately 30
      times greater than that needed by \texttt{CPLEX
        12.6}. Nevertheless the overall running times of our approach
      are much faster for moderate sizes of~$m$, emphasizing both the
      quick enumeration process within the branch-and-bound tree and
      the benefit of using reoptimization. Note that the performance
      of our approach highly depends on $m$. As the number of
      constraints grows, the computational effort for both solving the
      dual problem and recomputing the primal solution (see
      Theorem~\ref{theorem:3}), is growing as well.

      In Table~\ref{tab:early_pruning} we compare the performance of \texttt{FAST-QPA-BB} 
       with \texttt{FAST-QPA-BB-NP}, a version in which the early pruning is not implemented
      (see Section~\ref{subsec:dual}).
      We show the results for the pure integer instances of type~(a) with $p=1.0$.
      The benefits from the early pruning are evident: the average number of iterations 
      of \texttt{FAST-QPA} is decreased leading to faster running times so that $9$ more instances 
      can be solved.

      Our experimental results also underline the strong performance
      of \FASTQPA. The number of iterations of \FASTQPA\ needed in the
      root node of our branch-and-bound algorithm is very small on
      average: for~$m=50$ it is always below~$60$ and often much
      smaller. Using warmstarts, the average number of iterations
      drops to 1--6. 

      Besides the tables of average running times, we visualized our
      results by performance profiles in Figure~\ref{fig:perf_plots},
      as proposed in~\cite{dolan2002}. They confirm the result that \texttt{FAST-QPA-BB} outperforms
      \texttt{CPLEX 12.6} significantly.
      \begin{figure}
        \centering
        \psfrag{fastqpabb}[l][l]{{\tiny \texttt{FAST-QPA-BB}}}
        \psfrag{cplex}[l][l]{{\tiny \texttt{CPLEX 12.6}}}
        \includegraphics[scale=0.2,trim=2cm 0 1cm 0, clip]{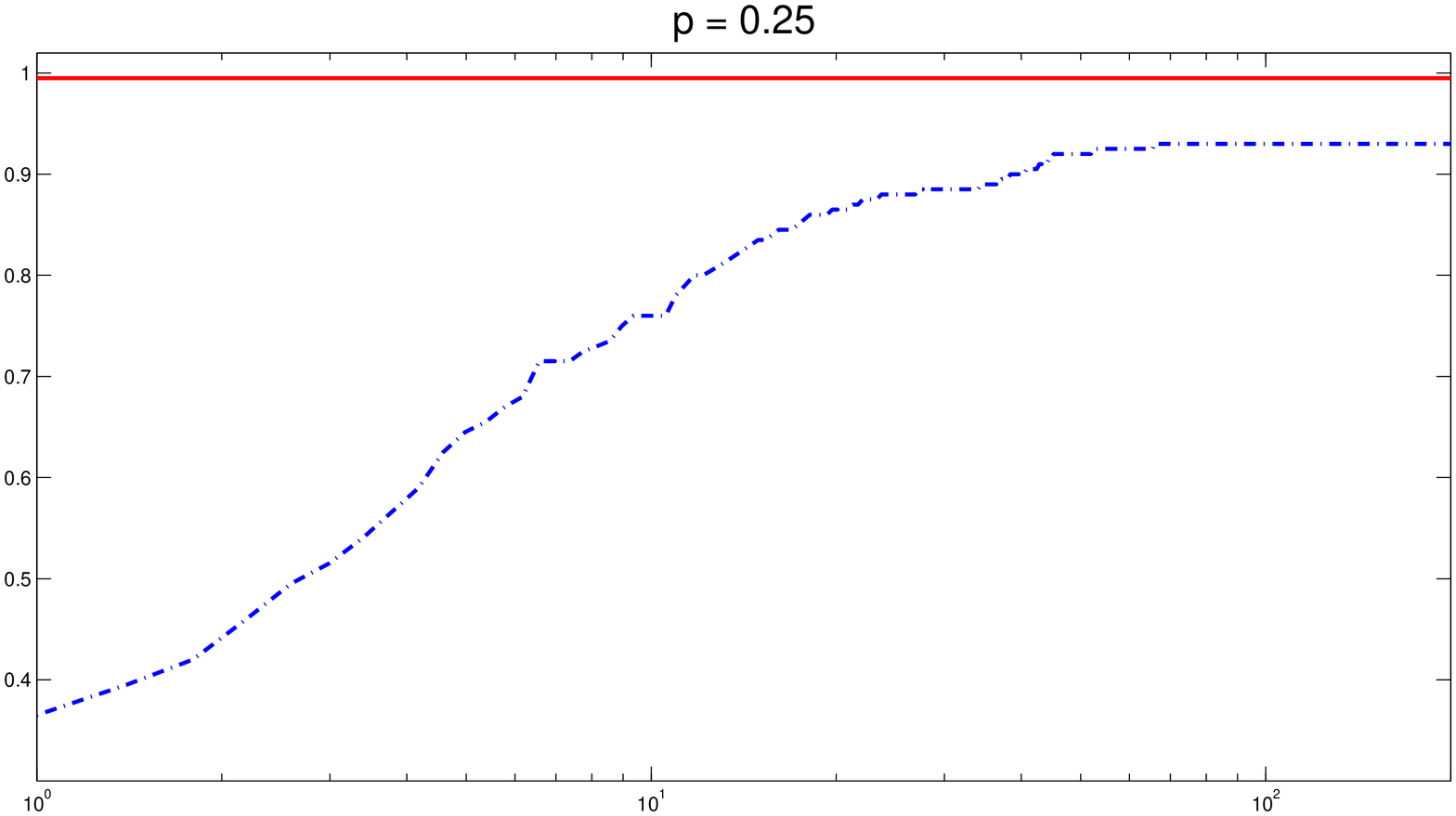}
        \includegraphics[scale=0.2,trim=2cm 0 1cm 0, clip]{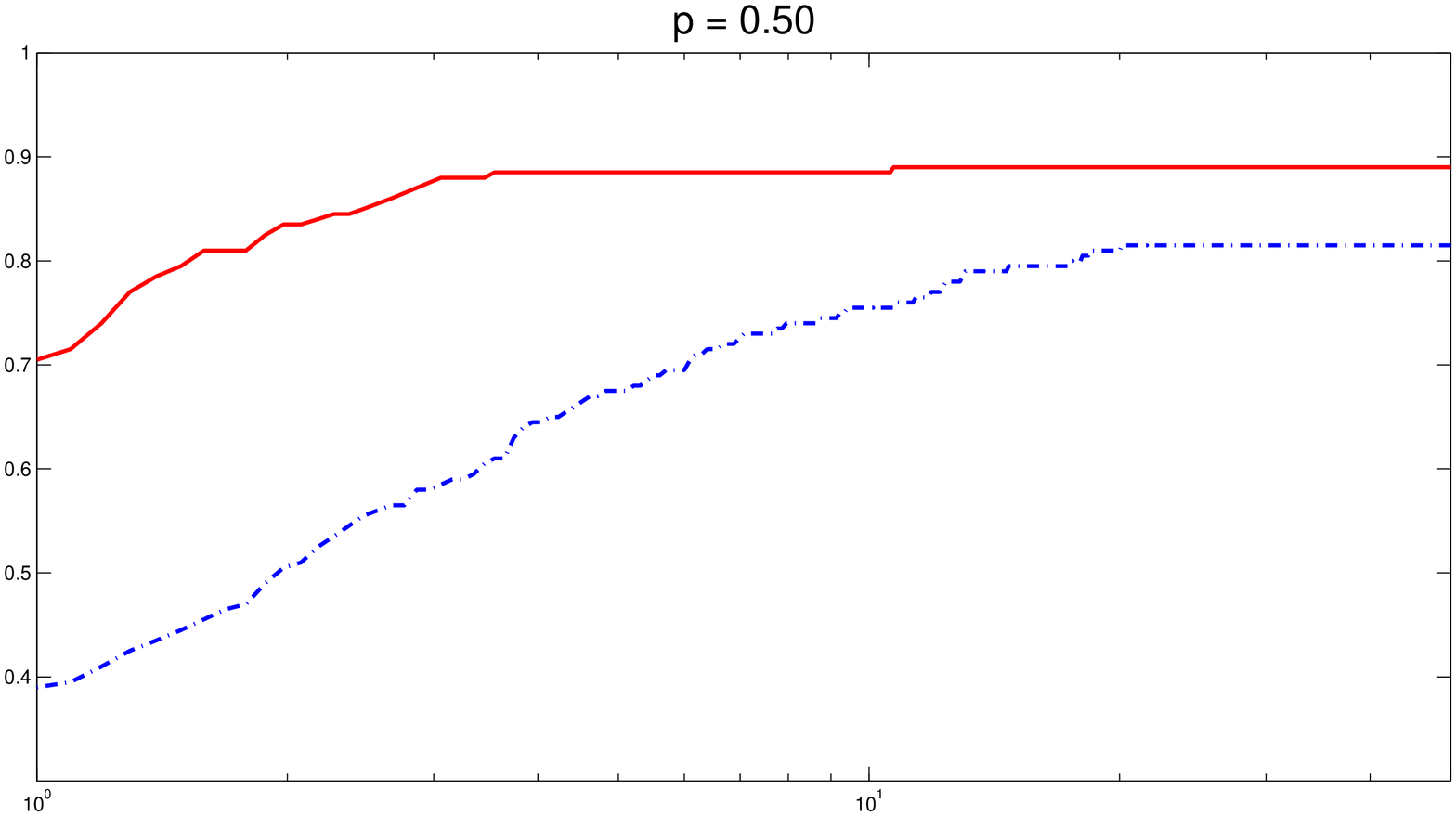}\\
        \includegraphics[scale=0.2,trim=2cm 0 1cm 0, clip]{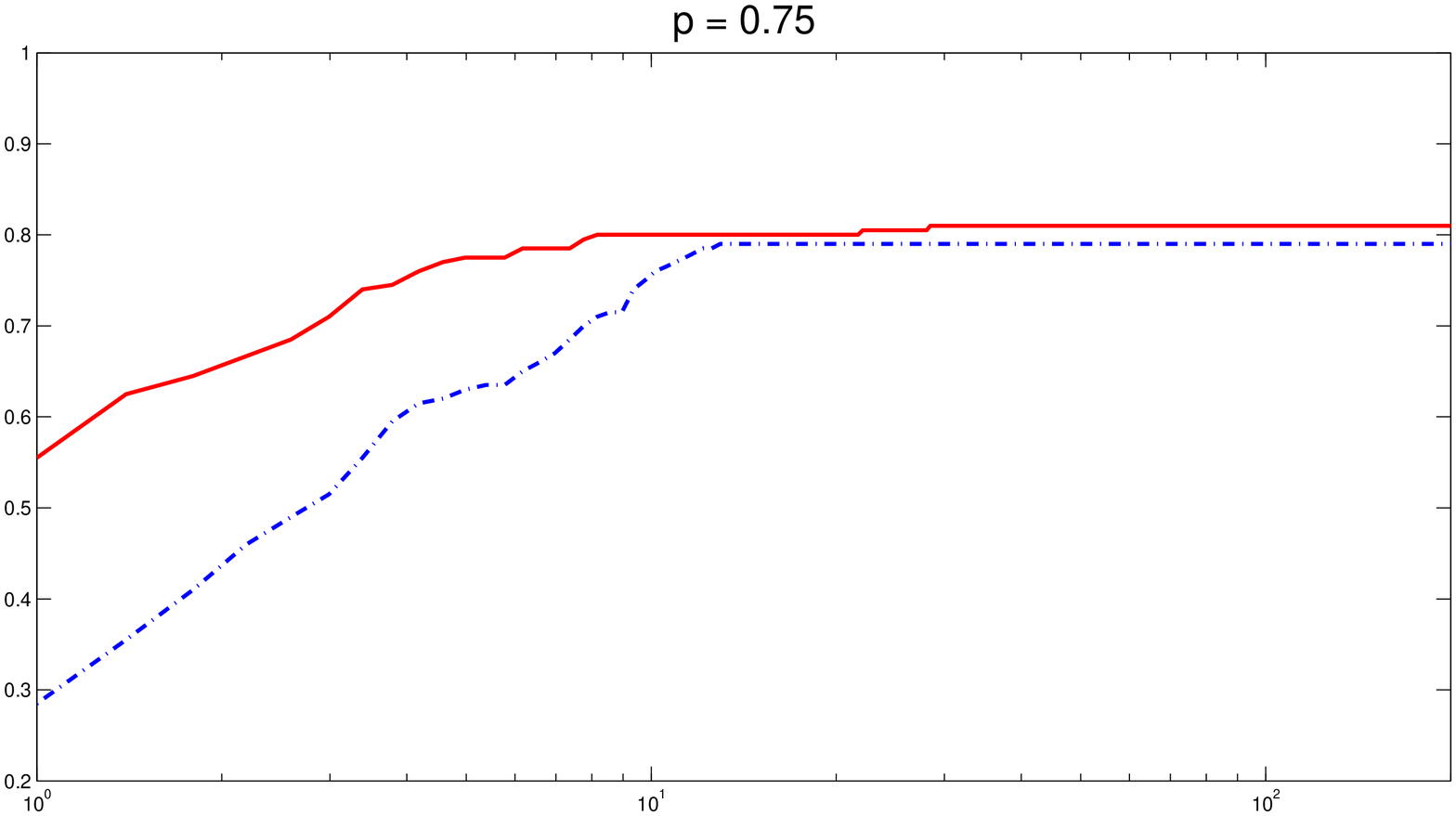}
        \includegraphics[scale=0.2,trim=2cm 0 1cm 0, clip]{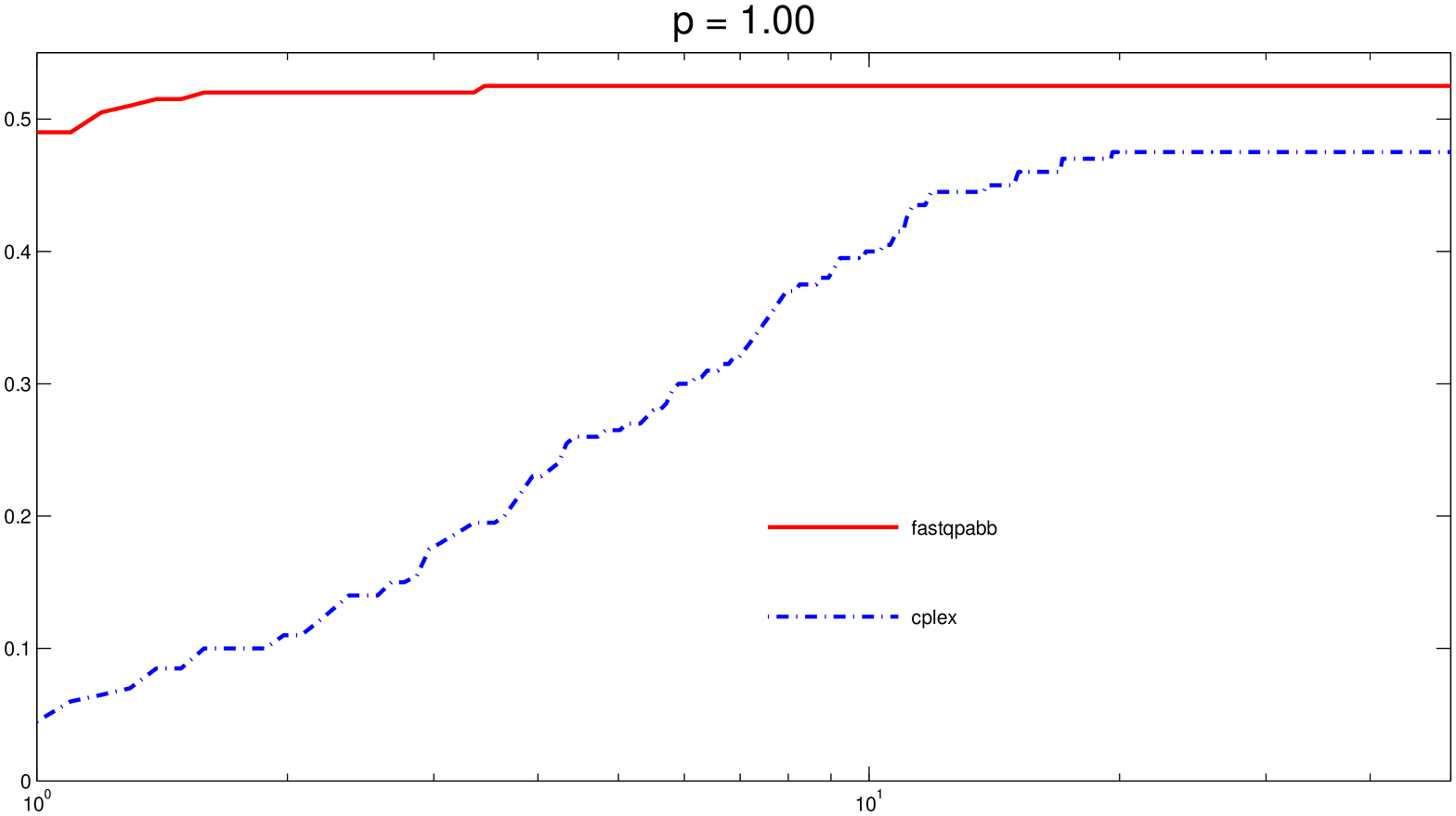}
        \caption{Performance profiles for all instances of type~(a)}
        \label{fig:perf_plots}
      \end{figure}


      \section{Conclusions}\label{sec:summary}
      We presented a new branch-and-bound algorithm for convex
      quadratic mixed integer minimization problems based on the use
      of an active set method for computing lower bounds. Using a dual
      instead of a primal algorithm considerably improves the running
      times, as it may allow an early pruning of the node. Moreover,
      the dual problem only contains non-negativity constraints,
      making the problem accessible to our tailored active set method
      \FASTQPA. Our sophisticated rule to estimate the active set
      leads to a small number of iterations of \FASTQPA\ in the root node
      that however grows as the number of constraints increases. This
      shows that for a large number of constraints the QPs addressed
      by \FASTQPA\ are nontrivial and their solution time has a big impact on the total
      running time, since we enumerate a large number of nodes in the
      branch-and-bound tree. Nevertheless, reoptimization helps to
      reduce the number of iterations of \FASTQPA\ per node
      substantially, leading to an algorithm that outperforms \texttt{CPLEX 12.6}
      on nearly all problem instances considered.

      \bibliographystyle{siam}
      \bibliography{references}

\end{document}